\def\d{\delta}
\def\C{\mathbb{C}}
\def\c2{\mathbb{C}^2}
\def\R{\mathbb{R}}
\def\Q{\mathbb{Q}}
\def\N{\mathbb{N}}
\def\P{\mathbb{P}}
\def\1{\mathbf{1}}
\def\a{\alpha}
\def\b{\beta}
\def\e{\varepsilon}
\def\E{\mathcal{E}}
\def\f{\varphi}
\def\p{\psi}
\def\om{\omega}
 \newcommand{\MA}{\mathrm{MA}}
\newtheorem{lem}{Lemma}[section]
\newtheorem{prop}[lem]{Proposition}
\newtheorem{defi}[lem]{Definition}
\newtheorem{def/not}[lem]{Definition/Notations}
\newtheorem{thm}[lem]{Theorem}
\newtheorem{cor}[lem]{Corollary}
\newtheorem{rem}[lem]{Remark}
\newtheorem{exa}[lem]{Example}
\numberwithin{equation}{section}
\begin{document}

\title[Finite energy classes]
{Choquet-Monge-Amp\`ere classes }

\author{Vincent GUEDJ*, Sibel SAHIN** and Ahmed ZERIAHI*}

 \date{\today \\ *These authors are partially supported by the ANR project GRACK\\**This author is supported by TUBITAK 2219 Grant}

\address{Institut Universitaire de France  \& Institut Math\'ematiques de Toulouse, \\
Universit{\'e} Paul Sabatier\\ 31062 Toulouse cedex 09\\ France}

\email{vincent.guedj@math.univ-toulouse.fr}

\address{\"{O}zye\v{g}in University (Istanbul) \& Institut de Math\'ematiques de Toulouse,   \\ Universit\'e Paul Sabatier \\
118 route de Narbonne \\
F-31062 Toulouse cedex 09\\}

\email{sibel.sahin@ozyegin.edu.tr}

\address{Institut de Math\'ematiques de Toulouse,   \\ Universit\'e Paul Sabatier \\
118 route de Narbonne \\
F-31062 Toulouse cedex 09\\}

\email{ahmed.zeriahi@math.univ-toulouse.fr}

\begin{abstract}
We introduce and study Choquet-Monge-Amp\`ere classes on compact K\"ahler manifolds.
They consist of quasi-plurisubharmonic functions whose sublevel sets have
small enough asymptotic Monge-Amp\`ere capacity.
We compare them with  finite energy classes, which have recently played an important
role in  K\"ahler Geometry.
\end{abstract}

\maketitle

\section*{Introduction}

Let $(X,\omega)$ be a compact K\"ahler manifold of complex dimension $n \geq 1$.
Recall that a quasi-plurisubharmonic function (qpsh for short) on $X$ is
an upper semi-continuous function $\f:X \rightarrow \R \cup \{-\infty\}$
which is locally the sum of a plurisubharmonic and a smooth function.
We write $\f \in PSH(X,\omega)$ if $\f$ is qpsh and
$\omega_\f:=\omega+dd^c \f$ is a positive current.
Here $d=\partial+\overline{\partial}$ and $d^c =\frac{i}{2\pi}(\partial-\overline{\partial})$
are both real operators, so that $dd^c=\frac{i}{\pi} \partial\overline{\partial}$.

There are various ways to measure the singularities of such functions.
We can measure the asymptotic size of the sublevel sets
$(\f<-t)$ as $t \rightarrow +\infty$ through the Monge-Amp\`ere capacity,
which is defined by
$$
Cap_{\omega} (K) := \sup \left\{ \int_K MA(u) ; u \in PSH (X,\omega), - 1 \leq u \leq 0 \right\}.
$$
where $MA(u)=\omega_u^n/\int_X \omega^n$ is a well-defined probability measure \cite{BT82}.

We can also consider the non-pluripolar measure
$$
MA(\f):=\lim_{j \rightarrow +\infty} \1_{\{\f>-j\}} MA( \max(\f,-j)).
$$
Following \cite{GZ07} we say that $\f \in \E(X,\omega)$ if $MA(\f)$ is a probability measure and set
$$
\E^p(X,\omega):=\{ \f \in \E(X,\omega) \, | \, \f \in L^p(MA(\f)) \}.
$$

The finite energy classes $\E^p(X,\omega)$ have played an important role in recent applications of pluripotential theory to K\"ahler geometry (see for ex. \cite{EGZ08,EGZ09,BBGZ13,BEG13}).
While their local analogues can be well understood by measuring the size of the sublevel sets,
this is not the case in the compact setting (see \cite{BGZ08,BGZ09}).

In this article we introduce and initiate the study of the Choquet-Monge-Amp\`ere classes
$$
\mathcal  Ch^p (X,\omega) := \left\{ \f \in PSH (X,\omega)  \; | \;
 \int_0^{+ \infty} t^{p+n - 1} C_{\omega} (\{ \f \leq - t \}) d t<+\infty \right\}.
$$

We show in Theorem \ref{thm:caract} that an $\omega$-psh function $\f$ belongs to
$\mathcal  Ch^p (X,\omega)$ if and only if it has finite Choquet energy
$$
\rm{Ch}_p (\f) := \int_X (-\f)^p \left[ (-\f) \omega+\omega_\f \right]^n <+\infty.
$$

We establish in Corollary  \ref{cor:comparaison} that Choquet classes compare to finite energy classes as follows,
$$
\mathcal E^{p + n - 1} (X,\omega) \subset \mathcal Ch^p(X,\omega) \subset \mathcal E^p (X,\omega).
$$

These classes therefore coincide in dimension $n=1$, but the inclusions are strict in general when
$n \geq 2$: the first inclusion is sharp for functions with divisorial singularities (Proposition \ref{prop:div}),
while the second inclusion is sharp for functions with compact singularities (Proposition \ref{prop:compact}).

We briefly describe the range of the complex Monge-Amp\`ere operator acting on Choquet classes
in Proposition \ref{prop:range} and Proposition \ref{pro:MA2}. This description is not as complete as the
corresponding one for finite energy classe in \cite{GZ07};
Choquet classes are rather meant to become a useful intermediate tool in the analysis
of the complex Monge-Amp\`ere operator.

\section{Choquet classes}

\subsection{Choquet capacity}

\subsubsection{Generalized capacities}

Let $\Omega$ be a Hausdorff
locally compact topological space which we  assume is $\sigma$-compact.
We denote by $2^{\Omega}$ the set of all subsets of $\Omega$.
A set function $ c : 2^{\Omega} \longrightarrow \bar{\R}^+ := [0, + \infty]$ is
called a capacity on $\Omega$ if it satisfies the following four properties:

\smallskip

$(i)$ $c(\emptyset) = 0$;

\smallskip

$(ii)$ $c$ is monotone, i.e.
$
 A \subset B \subset \Omega \Longrightarrow 0 \leq c(A) \leq c(B);
$

$(iii)$  if $(A_n )_{n \in \N}$ is a non-decreasing sequence of subsets of $\Omega$, then
$$
c (\cup_n A_n) = lim_{n \to + \infty} c(A_n ) = \sup_n c(A_n );
$$

$(iv)$ if $(K_n )$ is a non-increasing sequence of compact subsets of
$\Omega$,
$$
c(\cap_n K_n )  = lim_n c(K_n) = \inf_n c(K_n).
$$

A capacity $c$ is said to be a Choquet capacity  if it is subadditive, i.e. if it satisfies the following extra condition

$(v$)  if $(A_n)_{n \in \N}$ is any sequence of subsets of $\Omega$, then
$$
c(\cup_n A_n) \leq \sum_n c (A_n),
$$

Capacities are usually first defined  for Borel subsets and then extended to all sets by building the
appropriate outer set function.

 \begin{exa} \label{exa:notcapacity}
 Let $\mathcal M$ be a family of Borel measures on $\Omega$. The set function defined on any Borel subsets
$A \subset \Omega$ by the formula
$$
c_{\mathcal M} (A) := sup \{ \mu (A); \mu \in \mathcal M\}
$$
is a precapacity on $\Omega$. It is called the upper envelope of  $\mathcal M$.
Observe that this precapacity need not be additive.
The precapacity $c_{\mathcal M}$ need not be outer regular either unless $\mathcal M$ is a  finite set.
However if $\mathcal M$ is a compact set for the weak$^*$-topology then $c_{\mathcal M}^*$ is a Choquet capacity.
\end{exa}

If  $c$ is a Choquet capacity on $\Omega$,  every Borel subset $B \subset \Omega$ satisfies
$$
c (B) = \sup \{ c (K) ; K \, \, \text{compact} \, \, K \subset B\}.
$$
  This is a special case of Choquet's capacitability theorem.

\subsubsection{Monge-Amp\`ere capacities}

Let $(X,\omega)$ be a compact K\"ahler manifold of dimension $n$.
We let $L^p (X)=L^p(X,\R,dV)$ denote the Lebesgue space of real valued measurable functions
which are $L^p$-integrable with respect to a fixed volume form $dV$.

We denote by $PSH (X,\omega)$ the set of $\omega$-plurisubharmonic functions: these are
functions $\f:X \rightarrow \R \cup \{ -\infty\}$ which are locally the sum of a plurisubharmonic
and a smooth function, and such that $\omega_\f:=\omega+dd^c \f \geq 0$ in the sense of currents.
 Recall that for all $p \geq 1$,
$$
 PSH (X,\omega) \subset L^p (X).
$$

The Monge-Amp\`ere capacity $C_{\omega}$ is defined for Borel sets $K \subset X$ by
$$
C_{\omega} (K) := \sup \left\{ \int_K MA(u) ; u \in PSH (X,\omega), - 1 \leq u \leq 0 \right\}.
$$
where $MA(u)=\omega_u^n/\int_X \omega^n$ is a well-defined probability measure \cite{BT82}.

It follows from the work of Bedford-Taylor \cite{BT82} that $C_{\omega}$ is a Choquet capacity.
We refer the reader to \cite{GZ05} for basics on $PSH(X,\omega)$ and $C_{\omega}$.

\subsection{The Choquet integral}

Let $C$ be a Choquet capacity on $X$, a compact topological space.

\begin{defi}
The Choquet's integral of a non negative Borel function $f : X \longrightarrow \R^+$ is
$$
\int_X f d C := \int_0^{+ \infty} C (\{ f \geq t\}) d t.
$$
\end{defi}

A change of variables shows that for any exponent $p \geq 1$,
$$
 \int_X f^p d C := p \int_0^{+ \infty} t^ {p - 1} C (\{ f \geq t\}) d t.
$$

Observe that if $K \subset X$ is a Borel set then
$$
 \int_X {\bf 1}_K d C = C (K).
$$

\begin{defi}
We set, for $p \geq 1$,
$$
 \mathcal L^p (X,C) := \{ f \in \mathcal B (X,\R) ;  \Vert f \Vert_{L^p (X,C)}  < + \infty \},
$$
where $\mathcal B (X,\R)$  is the space of real-valued Borel functions in $X$ and
$$
\Vert f \Vert _{L^p (X,C)} := \left( \int_X \vert f\vert^p d C \right)^{1/p}.
$$
\end{defi}

\begin{lem}
Let  $f, g \in \mathcal L^p (X,C)$ and $\lambda \in \R$.  Then

1. If $ 0 \leq f \leq g$ then  $\int_X f d C \leq \int_X g d C$.

\smallskip

2.
 $
 \Vert \lambda f \Vert_{L^p (X,C)} = \vert \lambda \vert  \Vert f \Vert_{L^p (X,C)}.
$

\smallskip

3.
$
 \Vert f + g \Vert_{L^p (X,C)} \leq 2 (\Vert f \Vert_{L^p (X,C)} + \Vert g \Vert_{L^p (X,C)}),
$
\end{lem}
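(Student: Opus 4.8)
The plan is to handle the three assertions through the layer-cake (Choquet integral) representation, treating them in increasing order of difficulty. For part 1, if $0 \leq f \leq g$ then for every $t \geq 0$ the superlevel sets satisfy $\{f \geq t\} \subset \{g \geq t\}$, so the monotonicity axiom $(ii)$ of the capacity $C$ gives $C(\{f \geq t\}) \leq C(\{g \geq t\})$. Integrating this pointwise inequality in $t$ over $(0,+\infty)$ yields directly $\int_X f\, dC \leq \int_X g\, dC$.

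For part 2, I would start from the change-of-variables identity already recorded in the excerpt, namely
$$
\Vert \lambda f \Vert_{L^p (X,C)}^p = p \int_0^{+\infty} t^{p-1} C(\{ \vert \lambda f \vert \geq t\})\, dt .
$$
When $\lambda \neq 0$ the level set rewrites as $\{\vert \lambda f\vert \geq t\} = \{\vert f\vert \geq t/\vert\lambda\vert\}$, and the substitution $s = t/\vert\lambda\vert$ extracts exactly a factor $\vert\lambda\vert^p$, so that $\Vert \lambda f \Vert^p = \vert\lambda\vert^p \Vert f \Vert^p$; the case $\lambda = 0$ is immediate from $C(\emptyset)=0$. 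Taking $p$-th roots concludes this part.

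Part 3 is the crux, and the only place where the subadditivity axiom genuinely enters. The key elementary observation is the set inclusion
$$
\{ \vert f+g\vert \geq t\} \subset \{\vert f\vert \geq t/2\} \cup \{\vert g\vert \geq t/2\},
$$
valid for every $t>0$ because $\vert f+g\vert \geq t$ forces at least one of $\vert f\vert, \vert g\vert$ to be $\geq t/2$. Applying the subadditivity axiom $(v)$ of $C$ together with monotonicity, and then performing the change of variables $s=t/2$ as above on each resulting integral (which produces a factor $2^p$ in each), gives
$$
\Vert f+g \Vert_{L^p (X,C)}^p \leq 2^p \left( \Vert f \Vert_{L^p (X,C)}^p + \Vert g \Vert_{L^p (X,C)}^p \right).
$$
Taking $p$-th roots and invoking the elementary superadditivity estimate $(a^p+b^p)^{1/p} \leq a+b$, valid for $a,b \geq 0$ and $p \geq 1$, produces the claimed bound with constant $2$.

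The genuine subtlety, and the reason the statement carries a constant $2$ rather than being a true triangle inequality, is that $C$ is merely subadditive rather than additive, so $\Vert \cdot \Vert_{L^p(X,C)}$ is only a quasi-norm. I expect the bookkeeping of the factors of $2$ in the change of variables of part 3 to be the most error-prone step; everything else is a direct transcription of the capacity axioms through the layer-cake formula.
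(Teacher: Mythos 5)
Your proof is correct and follows essentially the same route as the paper, which likewise deduces part 3 from the subadditivity of $C$ together with the inclusion $\{f+g\geq t\}\subset\{f\geq t/2\}\cup\{g\geq t/2\}$ (the paper leaves parts 1 and 2 as obvious). Your write-up simply makes explicit the change-of-variables bookkeeping and the elementary inequality $(a^p+b^p)^{1/p}\leq a+b$ that the paper omits.
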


In particular $\mathcal L^p (X,C)$ is a vector space.
The above quasi-triangle inequality  defines a uniform structure,
which is furthermore metrizable for general reasons \cite{BOUR}, i.e.
 we can equip $\mathcal L^p (X,C)$ with an invariant metric $\rho$
such that a sequence $(f_j)$ converges to $f$ for the metric $\rho$ if and only if
$\lim_{j \to + \infty} \Vert  f_j - f \Vert_{L^p (X,C)} = 0$.

\begin{proof}
The first two items are obvious. The third one follows from the subaddivity of the capacity and the inclusion
$$
\{ f+g \geq t \} \subset \{ f \geq \frac{t}{2} \} \cup \{ g \geq \frac{t}{2} \}.
$$
\end{proof}

\begin{lem} \label{lem:continuity}
Let $(f_j)$ be a  sequence of non-negative Borel functions on $X$.

1. If $(f_j)$ is non-decreasing  and $f := \sup_j f_j$, then
$$
 \int_X f d C = \lim_{j \to + \infty} \int_X f_j d C = \sup_j \int_X f_j d C.
$$

2. If $(f_j)$ is a decreasing sequence of positive upper semi-continuous functions
 and $f := \inf_j f_j$, then
$$
 \int_X f d C = \lim_{j \to + \infty} \int_X f_j d C = \inf_j \int_X f_j d C.
$$
\end{lem}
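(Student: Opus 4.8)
The plan is to reduce both statements to the corresponding continuity properties of the capacity $C$ on sets (axioms $(iii)$ and $(iv)$), combined with the classical monotone and dominated convergence theorems for the one-dimensional Lebesgue integral in the variable $t$ appearing in the definition $\int_X f\, dC = \int_0^{+\infty} C(\{f \geq t\})\, dt$.

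For the first item, the key observation is that for every $t > 0$ the super-level sets $\{f_j > t\}$ form a non-decreasing sequence (in $j$) of Borel sets, and because $f = \sup_j f_j$ one has the clean identity $\bigcup_j \{f_j > t\} = \{f > t\}$. Here the strict inequality is essential: with non-strict inequalities the union only fills up $\{f \geq t\}$ up to a subset of the level set $\{f = t\}$. Axiom $(iii)$ then gives, for each fixed $t$, that $C(\{f > t\}) = \sup_j C(\{f_j > t\})$. I would integrate this identity in $t$: since $t \mapsto C(\{f_j > t\})$ is non-negative and non-decreasing in $j$, the monotone convergence theorem yields $\int_0^{+\infty} C(\{f>t\})\,dt = \sup_j \int_0^{+\infty} C(\{f_j > t\})\, dt$. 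Finally, to pass from strict to non-strict super-level sets inside the Choquet integral, I use that for any non-negative Borel $g$ the two non-increasing functions $t \mapsto C(\{g \geq t\})$ and $t \mapsto C(\{g > t\})$ agree off the (at most countable) set of discontinuities of the former; indeed $C(\{g>t\}) = \lim_{s \searrow t} C(\{g \geq s\})$ by another application of $(iii)$, so the two functions have the same Lebesgue integral. This gives $\int_X f\, dC = \lim_j \int_X f_j\, dC = \sup_j \int_X f_j\, dC$.

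For the second item, the essential new input is the \emph{upper semi-continuity} of the $f_j$. Since $X$ is compact and each $f_j$ is usc, the super-level set $\{f_j \geq t\}$ is closed, hence compact, for every $t$. As $(f_j)$ is decreasing these compact sets are non-increasing in $j$, and because $f = \inf_j f_j$ one has directly $\bigcap_j \{f_j \geq t\} = \{f \geq t\}$, with no level-set subtlety arising (the defining condition is $f_k(x) \geq t$ for all $k$). Axiom $(iv)$, continuity along non-increasing sequences of \emph{compact} sets, then gives $C(\{f \geq t\}) = \inf_j C(\{f_j \geq t\})$ for each $t$. Integrating in $t$ now requires passing a decreasing limit through the integral, so I would invoke the dominated convergence theorem with dominating function $t \mapsto C(\{f_1 \geq t\})$: this is legitimate because $f_1$, being usc on the compact $X$, is bounded above, so $\{f_1 \geq t\} = \emptyset$ for $t$ large and $\int_X f_1\, dC < +\infty$. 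This yields $\int_X f\, dC = \inf_j \int_X f_j\, dC = \lim_j \int_X f_j\, dC$.

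The two applications of convergence theorems in the variable $t$ are routine; the points deserving genuine care, and the main obstacle, are the set-level reductions. In item $1$ one must resist writing $\bigcup_j \{f_j \geq t\} = \{f \geq t\}$, which fails precisely on $\{f = t\}$; the strict-inequality formulation together with the ``equal almost everywhere in $t$'' comparison circumvents this cleanly. In item $2$ the obstacle is that axiom $(iv)$ is available only for compact sets, so the usc hypothesis is indispensable in order to guarantee that the super-level sets are compact before $(iv)$ can be applied; the boundedness of $f_1$ provided by the same hypothesis additionally supplies the integrable majorant needed for the dominated convergence step.
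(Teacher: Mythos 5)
Your proof is correct and follows exactly the route the paper indicates (it only says the lemma ``follows from continuity properties of the Choquet capacity'' and leaves the details to the reader): axioms $(iii)$ and $(iv)$ applied fibrewise in $t$, with the strict versus non-strict level-set comparison in item 1 and the compactness of the usc super-level sets in item 2 being precisely the points that need care, and you handle both correctly. The only caveat worth recording is that your integrable majorant in item 2 requires $C(X)<+\infty$ (otherwise the statement itself can fail, e.g.\ for $f_j\equiv 1/j$), which is harmless here since $C_{\omega}(X)=1$.
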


This lemma follows from continuity properties of the Choquet capacity;
the proof  is left to the reader.

\subsection{Choquet-Monge-Amp\`ere classes}

Let $(X,\omega)$ be a compact K\"ahler manifold of dimension $n$.

\begin{defi}
The Choquet-Monge-Amp\`ere class is
$$
\mathcal  Ch^p (X,\omega) := PSH (X,\omega) \cap \mathcal L^{p+n} (X,C_{\omega}).
$$

\end{defi}

Observe that when $\f \in \mathcal Ch^p (X,\omega)$ and $\f \leq 0$  then

$$
 \int_X (- \f)^{p+n} d C_{\omega} = (p+n) \int_0^{+ \infty} t^{p+n - 1} C_{\omega} (\{ \f \leq - t \}) d t,
$$
and
$$
 C_{\omega} (\{ \f \leq - t \})  \leq t^{- p-n} \int_X (- \f)^{p+n} d C_{\omega}.
$$

\begin{prop}  \label{pro:convex}
The class  $\mathcal Ch^p (X,\omega)$ is convex.

If $(\f_j) \in \mathcal Ch^p (X,\omega)^{\N}$
converges in $L^ 1 (X)$ to $\f \in PSH (X,\omega)$  and satisfies
$
  \sup_j \int_X (- \f_j)^{p+n} \, d C_{\omega} < + \infty,
 $
then $ \f \in \mathcal Ch^p (X,\omega)$ and
$$
\int_X (- \f)^{p+n} d C_{\omega} \leq \liminf_{j \to + \infty}  \int_X (- \f_j)^{p+n} d C_{\omega}.
$$
\end{prop}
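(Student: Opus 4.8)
The plan is to treat the two assertions separately, the first being essentially formal and the second carrying the real content.

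For convexity I would simply observe that $\mathcal Ch^p (X,\omega) = PSH (X,\omega) \cap \mathcal L^{p+n} (X,C_{\omega})$ is the intersection of two convex sets: the class $PSH (X,\omega)$ is a convex cone, since a convex combination of $\omega$-psh functions is again $\omega$-psh, while $\mathcal L^{p+n} (X,C_{\omega})$ is a vector space by the preceding lemma (the quasi-triangle inequality together with homogeneity). An intersection of convex sets is convex, so nothing further is required.

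For the second statement I may assume, after subtracting a uniform upper bound, that $\varphi_j \leq 0$ for all $j$ (the $\varphi_j$ have uniformly bounded suprema because their $L^1$ norms converge, and an $\omega$-psh function controls its supremum by its $L^1$ norm). Using the layer-cake representation
$$
\int_X (-\varphi)^{p+n} \, d C_{\omega} = (p+n) \int_0^{+\infty} t^{p+n-1} \, C_{\omega}(\{ \varphi \leq -t \}) \, d t,
$$
I would reduce everything to the pointwise-in-$t$ lower semicontinuity of the capacity of sublevel sets, namely $C_{\omega}(\{ \varphi < -t \}) \leq \liminf_{j \to +\infty} C_{\omega}(\{ \varphi_j < -t \})$ for every $t>0$; call this inequality $(\ast)$. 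Granting $(\ast)$, and noting that $t \mapsto C_{\omega}(\{\varphi \leq -t\})$ and $t \mapsto C_{\omega}(\{\varphi < -t\})$ are monotone and hence agree outside a countable set of values of $t$ (so yield the same integral), an application of Fatou's lemma in the variable $t$ to the nonnegative integrands $t^{p+n-1} C_{\omega}(\{\varphi_j < -t\})$ gives
$$
\int_X (-\varphi)^{p+n} d C_{\omega} \leq \liminf_{j} \int_X (-\varphi_j)^{p+n} d C_{\omega} \leq \sup_j \int_X (-\varphi_j)^{p+n} d C_{\omega} < +\infty,
$$
which yields simultaneously $\varphi \in \mathcal Ch^p (X,\omega)$ and the asserted inequality.

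To prove $(\ast)$ the key tool is the Hartogs-type property of $\omega$-psh functions: since $\varphi_j \to \varphi$ in $L^1(X)$, the upper regularization satisfies $(\limsup_j \varphi_j)^* = \varphi$. I would introduce the decreasing sequence $u_k := (\sup_{j \geq k} \varphi_j)^*$ of $\omega$-psh functions, which then decreases pointwise to $\varphi$. Because $u_k \downarrow \varphi$, the open set $\{\varphi < -t\}$ is the increasing union $\bigcup_k \{u_k < -t\}$, so property $(iii)$ of the capacity gives $C_{\omega}(\{\varphi < -t\}) = \sup_k C_{\omega}(\{u_k < -t\})$. Since $u_k \geq \varphi_j$ for every $j \geq k$, monotonicity yields $C_{\omega}(\{u_k < -t\}) \leq \inf_{j \geq k} C_{\omega}(\{\varphi_j < -t\})$, and letting $k \to +\infty$ produces exactly $(\ast)$. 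The main obstacle is precisely this step: controlling the capacity of sublevel sets under $L^1$ convergence. It is here that the interplay between the Hartogs lemma $(\limsup_j \varphi_j)^* = \varphi$ and the inner continuity $(iii)$ of $C_{\omega}$ is essential, and where one must work with the open sets $\{\varphi < -t\}$ rather than $\{\varphi \leq -t\}$ so that increasing-union continuity applies. Once $(\ast)$ is secured, the passage to the energy inequality is the routine Fatou argument described above.
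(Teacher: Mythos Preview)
Your proof is correct and rests on the same key construction as the paper: the decreasing regularized suprema $u_k=(\sup_{j\ge k}\varphi_j)^*$, combined with monotone continuity of the capacity. The only difference is presentational---the paper invokes Lemma~\ref{lem:continuity} once on the increasing sequence $(-u_k)^{p+n}$, whereas you equivalently unpack that step at the level of sublevel sets via property~(iii) and then integrate with Fatou.
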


\begin{proof}
Set
$$
 \tilde \f_j := \left(\sup_{\ell \geq j} \f_{\ell} \right)^*.
$$
Then $(\tilde \f_j)$ is a non-increasing sequence of $PSH(X,\omega)$ which converges to $\f$
pointwise.
Since $\f_j \leq \tilde \f_j \leq 0$ for all $j$, we infer that  $\tilde \f_j \in \mathcal Ch^p (X,\omega)$ and
$$
 \int_X (- \tilde \f_j)^{p+n} d C_{\omega}  \leq  \int_X (- \f_j)^{p+n} d C_{\omega}
\leq M:=  \sup_j \int_X (- \f_j)^{p+n} \, d C_{\omega}.
$$
By Lemma \ref{lem:continuity} we conclude that
\begin{eqnarray*}
\int_X (- \f)^{p+n} d C_{\omega} &= &\lim_j \int_X (- \tilde \f_j)^{p+n} d C_{\omega}  \\
&\leq&  \liminf_j \int_X (-\f_j)^{p+n} d C_{\omega} \leq M.
\end{eqnarray*}
Hence $\f \in \mathcal Ch^p (X,\omega)$ and the required inequality follows.
\end{proof}

\section{Energy estimates}

We now compare the Choquet-Monge-Amp\`ere classes with the finite energy classes
${\mathcal E}^q(X,\omega)$ introduced in \cite{GZ07}.

\subsection{Finite energy classes}

Given   $\f \in PSH(X,\omega)$, we consider its canonical approximants
$$
\f_j:=\max(\f, -j) \in PSH(X,\omega) \cap L^{\infty}(X).
$$
It follows from the Bedford-Taylor theory that the measures $MA(\f_j)$ are well defined probability measures.
 Since the $\f_j$'s are decreasing, it is natural to expect that these measures converge (in the weak sense).
The following strong monotonicity property holds:

\begin{lem}
The sequence
$
\mu_j:={\bf 1}_{\{ \f>-j\}} MA(\f_j)
$
is an increasing sequence of Borel measures.
\end{lem}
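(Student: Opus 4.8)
The plan is to reduce the asserted monotonicity $\mu_j \leq \mu_{j+1}$ to two ingredients: a locality property of the Monge-Amp\`ere operator acting on bounded $\omega$-psh functions, and the elementary monotonicity of indicator functions under inclusion of sublevel sets. First I would record the algebraic relation between consecutive approximants. Since $-j > -(j+1)$, one checks directly that
$$
\f_j = \max(\f, -j) = \max(\max(\f, -(j+1)), -j) = \max(\f_{j+1}, -j),
$$
so that $\f_j$ and $\f_{j+1}$ coincide \emph{exactly} on $\{\f > -j\}$, while on $\{\f \leq -j\}$ one has $\f_{j+1} < \f_j = -j$. In particular $\{\f > -j\} = \{\f_{j+1} > -j\}$ and $\f_{j+1} \leq \f_j$.

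The heart of the matter is to establish
$$
\1_{\{\f > -j\}} MA(\f_j) = \1_{\{\f > -j\}} MA(\f_{j+1}),
$$
which is where I would invoke the locality of the complex Monge-Amp\`ere operator on bounded $\omega$-psh functions (Bedford-Taylor \cite{BT82}). I expect this to be the main obstacle, for the following reason: the set $\{\f > -j\}$ is \emph{not} open in the Euclidean topology, $\f$ being merely upper semicontinuous, so only the sublevel sets $\{\f < c\}$ are open and the naive open-set locality does not apply to the set on which $\f_j$ and $\f_{j+1}$ agree. The clean way around this is to pass to the plurifine topology, in which every $\omega$-psh function is continuous and hence $\{\f > -j\} = \{\f_{j+1} > -j\}$ is open; Bedford-Taylor's theorem then guarantees that $MA$ is local for this topology, so that from $\f_j = \f_{j+1}$ on this plurifinely open set the two measures agree there. (Should one prefer to stay elementary, the open-set locality on $\{\f < -j\}$ already shows $MA(\f_j)$ puts no mass there, since $\f_j \equiv -j$ is constant, but handling the contact set $\{\f = -j\}$ still forces one back to the plurifine statement, so I would keep the plurifine locality as the decisive tool.)

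Finally I would conclude by the monotonicity of indicators. Since $\{\f > -j\} \subset \{\f > -(j+1)\}$ and $MA(\f_{j+1}) \geq 0$ as a positive measure, combining the two displays above gives
$$
\mu_j = \1_{\{\f > -j\}} MA(\f_j) = \1_{\{\f > -j\}} MA(\f_{j+1}) \leq \1_{\{\f > -(j+1)\}} MA(\f_{j+1}) = \mu_{j+1},
$$
which is exactly the desired inequality. Thus the only genuinely substantive step is the plurifine locality used in the middle equality; once that is granted, the rest is bookkeeping on the supports.
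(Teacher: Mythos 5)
Your argument is correct and is essentially the paper's (which simply defers to the maximum principle and \cite[p.445]{GZ07}): the identity $\f_j=\max(\f_{j+1},-j)$ together with your middle equality $\1_{\{\f>-j\}}MA(\f_j)=\1_{\{\f>-j\}}MA(\f_{j+1})$ is exactly the maximum principle $\1_{\{u>v\}}MA(\max(u,v))=\1_{\{u>v\}}MA(u)$ applied to $u=\f_{j+1}$ and $v=-j$. Justifying that equality via Bedford--Taylor plurifine locality rather than quoting the maximum principle directly is a valid (slightly heavier, but interchangeable) choice of key input, and the remaining bookkeeping with the indicators is the same.
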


 The proof is an elementary consequence of the maximum principle (see \cite[p.445]{GZ07}).
Since the $\mu_j$'s all have total mass bounded from above by $1$ (the total mass of the measure $MA(\f_j)$),  we can consider
$$
\mu_{\f}:=\lim_{j \rightarrow +\infty} \mu_j,
$$
which is a positive Borel measure on $X$, with total mass $\leq 1$.

\begin{defi}
We set
$$
{\mathcal E}(X,\omega):=\left\{ \f \in PSH(X,\omega) \; | \; \mu_{\f}(X)=1 \right\}.
$$
For $\f \in {\mathcal E}(X,\omega)$, we set
$
MA(\f):=\mu_{\f}.
$
\end{defi}

The notation is justified by the following important fact:
the complex Monge-Amp\`ere operator $\f \mapsto MA(\f)$ is well defined on the class
${\mathcal E}(X,\omega)$, i.e. for every decreasing sequence of bounded (in particular
smooth) $\omega$-psh functions $\f_j$, the probability measures
$MA(\f_j)$ weakly converge towards $\mu_\f$, if $\f \in {\mathcal E}(X,\omega)$.

\smallskip

Every bounded $\omega$-psh function clearly belongs to ${\mathcal E}(X,\omega)$
since  in this case $\{\f >-j\}=X$ for $j$ large enough, hence
$$
\mu_{\f} \equiv \mu_j=MA(\f_j)=MA(\f).
$$
The class ${\mathcal E}(X,\omega)$ also contains many $\omega$-psh functions which are unbounded.
When $X$ is a compact Riemann surface ($n=\dim_{\C} X=1$),
the set ${\mathcal E}(X,\om)$ is the set of $\om$-sh functions
whose Laplacian does not charge polar sets.

\begin{rem}
If $\f \in PSH(X,\omega)$ is normalized so that $\f \leq -1$, then $-(-\f)^\e $ belongs to
${\mathcal E}(X,\om)$ whenever $0 \leq \e <1$.
The functions which belong to the class ${\mathcal E}(X,\om)$,
although usually unbounded,
have relatively mild singularities. In particular they have zero Lelong numbers.
\end{rem}

It is shown in \cite{GZ07} that  the comparison principle holds in ${\mathcal E}(X,\omega)$:

\begin{prop} \label{pro:CP}
Fix $u,v \in \E(X,\omega)$. Then
$$
\int_{\{v<u\}}  MA(u) \leq \int_{\{v<u\}}  MA(v).
$$
\end{prop}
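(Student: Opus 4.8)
The plan is to prove the comparison principle for the class $\E(X,\omega)$ by first establishing it for bounded $\omega$-psh functions and then passing to the limit via the canonical approximants $\f_j = \max(\f,-j)$. For bounded functions the comparison principle is a standard result in Bedford--Taylor theory: the key identity is that on the open set $\{v<u\}$ one has, after integration by parts,
$$
\int_{\{v<u\}} (\omega_u^n - \omega_v^n) \leq 0,
$$
which follows from writing $\omega_u^n - \omega_v^n = dd^c(u-v) \wedge T$ with $T = \sum_{k} \omega_u^k \wedge \omega_v^{n-1-k}$ a closed positive current, and integrating by parts on $\{v<u\}$ where $u-v \geq 0$ vanishes on the boundary. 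After normalizing by $\int_X \omega^n$ this gives the stated inequality for $u,v$ bounded.

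First I would reduce to the bounded case. Given $u,v \in \E(X,\omega)$, set $u_j = \max(u,-j)$ and $v_j = \max(v,-j)$, which are bounded and decrease to $u,v$. The bounded comparison principle gives
$$
\int_{\{v_j<u_j\}} MA(u_j) \leq \int_{\{v_j<u_j\}} MA(v_j).
$$
Next I would relate the sets $\{v_j<u_j\}$ to $\{v<u\}$: on the region where both functions exceed $-j$, one has $u_j=u$, $v_j=v$, so $\{v<u\}\cap\{u>-j\}\cap\{v>-j\} = \{v_j<u_j\}\cap\{u>-j\}\cap\{v>-j\}$, and I would use the truncations to isolate the contribution of $\1_{\{u>-j\}}$ and $\1_{\{v>-j\}}$ on each side.

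The main obstacle, and the technical heart of the argument, will be the limiting step: controlling the behavior of the Monge--Amp\`ere measures near the pluripolar set $\{u=-\infty\}\cup\{v=-\infty\}$, where the approximants concentrate mass that must be shown not to contribute. The right tool is the definition of $MA(u)$ as the increasing limit $\lim_j \1_{\{u>-j\}} MA(u_j)$, together with the fact that for $u,v \in \E(X,\omega)$ the total masses are both $1$. I would insert the indicator $\1_{\{u>-k\}}$ (or use the monotonicity lemma preceding the definition) to write the left-hand side as an increasing limit converging to $\int_{\{v<u\}} MA(u)$, and dually handle the right-hand side; the inequality on open sets is preserved under such monotone limits because $\{v<u\}$ is open while the truncated sets exhaust it from inside. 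One must be careful that $\{v<u\}$ is genuinely open (both functions being usc with $u$ allowed to take value $-\infty$ requires interpreting the inequality pointwise) and that no mass escapes to the boundary $\{v=u\}$; this is where the $\E(X,\omega)$ hypothesis, ensuring the limiting measures are honest probability measures charging no pluripolar set, is essential. The full details of this limiting argument are carried out in \cite{GZ07}.
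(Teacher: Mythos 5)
The paper offers no proof of this proposition: it is quoted from \cite{GZ07}, so there is nothing in-house to compare your argument against, and I assess your outline on its own terms. The strategy (bounded comparison principle plus truncation by the canonical approximants) is the right one and can be completed, but two points need repair. First, $\{v<u\}$ is \emph{not} open: both $u$ and $v$ are merely upper semi-continuous, so this set is only Borel, and a portmanteau-type argument based on openness would not work. Fortunately openness is irrelevant, because $\1_{\{u>-j\}} MA(u_j)\uparrow MA(u)$ is an \emph{increasing} limit of Borel measures (the monotonicity lemma recalled in Section 2), so masses of arbitrary Borel sets converge monotonically; no weak convergence is involved. Second, the limiting step you defer is where all the content lies, and it closes more cleanly than your sketch suggests: with $u_j=\max(u,-j)$ and $v_j=\max(v,-j)$ one checks the exact identity $\{v_j<u_j\}=\{v<u\}\cap\{u>-j\}$, so the bounded comparison principle becomes
$$
\int_{\{v<u\}} \1_{\{u>-j\}}\, MA(u_j) \;\leq\; \int_{\{v<u\}\cap\{u>-j\}} MA(v_j).
$$
The left-hand side increases to $\int_{\{v<u\}} MA(u)$. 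The right-hand side is at most
$$
\int_{\{v<u\}} \1_{\{v>-j\}}\, MA(v_j) \;+\; MA(v_j)\bigl(\{v\leq -j\}\bigr)
\;\leq\; \int_{\{v<u\}} MA(v) \;+\; \Bigl(1-\int_X \1_{\{v>-j\}}\, MA(v_j)\Bigr),
$$
where the first estimate uses $\1_{\{v>-j\}} MA(v_j)\leq MA(v)$ and the error term tends to $0$ precisely because $v\in\E(X,\omega)$; this is the only place the finite-energy hypothesis enters, and it is needed on both sides. Finally, your integration-by-parts justification of the bounded case is heuristic as stated (the boundary of $\{v<u\}$ is irregular and $u-v$ need not vanish continuously there); the standard rigorous route replaces $v$ by $\max(u,v-\e)$ and applies Stokes on all of $X$, and \cite{GZ07} in fact handles the unbounded case by that same maximum-principle mechanism for $\max(u,v)$ together with the total-mass identity, rather than by truncating inside the comparison inequality. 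Either route is valid once the above details are supplied.
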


The class ${\mathcal E}(X,\omega)$ is  the largest class for which the complex Monge-Amp\`ere is well defined and the maximum principle holds.

\begin{defi}
We let
${\mathcal E}^p(X,\om)$ denote the set of
$\om$-psh
functions with
finite $p$-energy, i.e.
$$
{\mathcal E}^p(X,\om):=\left\{ \f \in {\mathcal E}(X,\om) \, / \,  (|\f|)^p \in L^1( MA(\f)) \right\}.
$$
\end{defi}

Here follows a few important properties of these classes (see \cite{GZ07}):
\begin{itemize}
\item when $p \geq 1$, any $\f \in {\mathcal E}^p(X,\om)$ is such that
$\nabla_\omega \f \in L^2(\om^n)$;
\item $\f \in \E^p(X,\omega)$ if and only if for any (resp. one)  sequence of bounded
$\omega$-functions decreasing to $\f$,
$\sup_j \int_X (-\f_j)^p MA(\f_j) <+\infty$;
\item the class $\E^p(X,\omega)$ is  convex.
\end{itemize}

\subsection{Choquet energy}

For $\f \in PSH^- (X,\omega)$ and $ p \geq 1$, we set
$$
\rm{Ch}_p (\f) := \sum_{j = 0}^{n}  C_n^j
 \int_X (- \f)^ {p + j} \omega_{\f}^ {n - j} \wedge \omega^j
=\int_X (-\f)^p \left[ (-\f) \omega+\omega_\f \right]^n.
$$
Here and in the sequel we use the french notation $C_n^j:=\left( \begin{array}{c} n \\ j \end{array} \right)$.

We recall the  following  useful result:

\begin{lem} \label{lem:Cap}
Fix  $\f,\p \in \mathcal E (X,\om)$.
Then for all $t  < 0$ and $0 \leq \d \leq 1$,
$$
\d^n C_{\om}(\{\f-\p<-t-\d\}) \leq \int_{\{\f-\p<-t- \d \p\}} MA (\f).
$$

In particular
$$
\d^n C_{\om}(\{\f <-t-\d\}) \leq \int_{\{\f <-t\}} MA (\f).
$$
\end{lem}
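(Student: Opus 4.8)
The plan is to estimate the capacity through its defining supremum and to feed an auxiliary competitor into the comparison principle (Proposition \ref{pro:CP}). By definition
$$
C_\om(\{\f-\p<-t-\d\})=\sup\left\{\int_{\{\f-\p<-t-\d\}}MA(u)\ ;\ u\in PSH(X,\om),\ -1\le u\le 0\right\},
$$
so it suffices to bound $\d^n\int_{\{\f-\p<-t-\d\}}MA(u)$, uniformly over such competitors $u$, by $\int_{\{\f-\p<-t-\d\p\}}MA(\f)$. The idea is to manufacture out of $u$ a function in $\E(X,\om)$ whose Monge-Amp\`ere measure dominates $\d^n MA(u)$ and whose sublevel set $\{\f<\,\cdot\,\}$ is squeezed between the two sets appearing in the statement.

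Concretely, I would set $v:=\d u+(1-\d)\p-t$. Being a convex combination of the $\om$-psh functions $u$ and $\p$ up to an additive constant, $v$ is again $\om$-psh, and since $u\in L^\infty(X)\subset\E(X,\om)$ and $\p\in\E(X,\om)$, convexity of the class $\E(X,\om)$ gives $v\in\E(X,\om)$. The essential gain is the Monge-Amp\`ere comparison: expanding
$$
\om_v^n=\bigl(\d\,\om_u+(1-\d)\,\om_\p\bigr)^n\ge \d^n\,\om_u^n,
$$
all the remaining mixed terms being wedge products of positive currents, so that $MA(v)\ge\d^n MA(u)$.

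Next I would verify the two-sided inclusion $\{\f-\p<-t-\d\}\subset\{\f<v\}\subset\{\f-\p<-t-\d\p\}$. Using $\d u\le 0$ one has $v\le(1-\d)\p-t$, which immediately yields the right-hand inclusion after rewriting $\{\f<(1-\d)\p-t\}=\{\f-\p<-t-\d\p\}$. The left-hand inclusion is the delicate point: on $\{\f-\p<-t-\d\}$ one has $\f<\p-t-\d$, and requiring $\f<v$ there reduces to the pointwise estimate $\p-1\le u$, i.e. to $\d u\ge\d(\p-1)$. Since $u\ge-1$, this holds precisely because $\p\le 0$ forces $\p-1\le-1\le u$; this is exactly where the normalization $\p\le 0$ (and the shift $-\d\p$ built into the right-hand set) enters, and I expect this inclusion to be the main obstacle of the argument.

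Finally I would apply Proposition \ref{pro:CP} to the pair $\f,v\in\E(X,\om)$ on the set $\{\f<v\}$, which gives $\int_{\{\f<v\}}MA(v)\le\int_{\{\f<v\}}MA(\f)$. Chaining this with the Monge-Amp\`ere comparison and the set inclusions,
$$
\d^n\int_{\{\f-\p<-t-\d\}}MA(u)\le\d^n\int_{\{\f<v\}}MA(u)\le\int_{\{\f<v\}}MA(v)\le\int_{\{\f<v\}}MA(\f)\le\int_{\{\f-\p<-t-\d\p\}}MA(\f),
$$
and taking the supremum over all admissible $u$ yields the first inequality. The second (``in particular'') statement is just the special case $\p\equiv 0$, for which $v=\d u-t$ and both inclusions hold without any normalization.
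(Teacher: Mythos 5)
Your proof is correct and follows essentially the same strategy as the paper's: squeeze the relevant sublevel set between the two target sets by means of an auxiliary competitor built from $u$, then combine the comparison principle (Proposition \ref{pro:CP}) with the lower bound $MA(v)\ge \d^n MA(u)$. The only real difference is one of completeness: the paper carries out this argument only for the special case $\p\equiv 0$ (the ``in particular'' inequality, using the competitor $\d u-t-\d$) and delegates the general first inequality to \cite{EGZ09}, whereas you prove the general case directly with $v=\d u+(1-\d)\p-t$, which is precisely the argument of \cite{EGZ09}. You are also right to flag the normalization $\p\le 0$: the inclusion $\{\f-\p<-t-\d\}\subset\{\f<v\}$ reduces to $\p-1\le u$ with $u\ge -1$, so it genuinely requires $\p\le 0$; this hypothesis is implicit in the lemma as stated (and harmless for the paper, which only uses the case $\p\equiv 0$), but it cannot be dispensed with, since the first inequality is not invariant under adding a positive constant to $\p$.
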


\begin{proof}
If $u$ is a $\omega$-psh function such that $0 \le u \le 1$, then
$$
\{\f<-t-\d\}\subset\{\f<\d u-t-\d\}\subset\{\f<-t\}.
$$

Since $\d^n\MA(u)\le\MA(\d u)$ and $\f \in {\mathcal E}(X,\omega)$
it follows from the comparison principle that
\begin{eqnarray*}
 \lefteqn{\hskip-2cm  \d^n\int_{\{\f<-t-\d\}}\MA(u)  \le \int_{\{\f<\d u-t-\d\}}\MA(\d u)}  \\
&& \le\int_{\{\f<\d u-t-\d\}}\MA(\f)
 \le\int_{\{\f<-t\}}\MA(\f).
\end{eqnarray*}

This proves the last inequality. The first one is a refinement of the first, we refer the reader to
\cite{EGZ09} for a proof.
\end{proof}

\begin{thm} \label{thm:caract}
For all $p \geq 1$ and $0 \geq \f \in PSH (X,\om) \cap L^{\infty}(X)$,
\begin{equation} \label{eq:compIneq1}
 \int_X (-\f)^{n + p} d C_{\omega} \leq  2^{n + p} \rm{Ch}_p (\f),
\end{equation}
 and
 \begin{equation} \label{eq:compIneq2}
 \rm{Ch}_p (\f)  \leq   V_{\omega} (X) +  (n + 1) 2^n \int_X (-\f)^{n + p} d C_{\omega}.
\end{equation}
In particular
$$
 \mathcal Ch^p (X,\omega) = \{ \f \in PSH(X,\omega) ; \rm{Ch}_p (\f) < + \infty\},
$$
and the inequalities (\ref{eq:compIneq1}) and (\ref{eq:compIneq2}) hold for all
$\f \in \mathcal Ch^p (X,\omega)$.
\end{thm}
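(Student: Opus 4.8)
The plan is to prove the two inequalities \eqref{eq:compIneq1} and \eqref{eq:compIneq2} first for bounded $\f\le 0$, and then bootstrap to the whole class by monotone approximation. Given $0\ge\f\in PSH(X,\om)$, set $\f_k:=\max(\f,-k)$. Since $(-\f_k)^{n+p}\nearrow(-\f)^{n+p}$, the Choquet integrals $\int_X(-\f_k)^{n+p}\,dC_{\om}$ increase to $\int_X(-\f)^{n+p}\,dC_{\om}$ by the monotone convergence property in Lemma \ref{lem:continuity}, while the weighted mixed energies defining $\rm{Ch}_p(\f_k)$ converge to $\rm{Ch}_p(\f)$ along the decreasing sequence $\f_k\searrow\f$ (arguing term by term on each $\int_X(-\f)^{p+j}\om_\f^{n-j}\wedge\om^j$, as in \cite{GZ07}). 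Passing to the limit in the bounded case yields both inequalities for every $\f\in\mathcal Ch^p(X,\om)$, and the equivalence $\int_X(-\f)^{n+p}\,dC_{\om}<+\infty\Longleftrightarrow\rm{Ch}_p(\f)<+\infty$ then gives the stated description of the class. So the entire content is the two inequalities for bounded $\f$.

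For \eqref{eq:compIneq1} I would use the layer-cake representation $\int_X(-\f)^{n+p}\,dC_{\om}=(n+p)\int_0^{+\infty}t^{n+p-1}C_{\om}(\{\f\le-t\})\,dt$ and bound each capacity from above by Lemma \ref{lem:Cap}. For small levels ($t\le 2$) the constraint $\d\le 1$ is not binding: choosing $\d$ proportional to $t$ gives the scaled estimate $t^{n}C_{\om}(\{\f<-t\})\lesssim\int_{\{\f<-t/2\}}\MA(\f)$, and a Fubini computation bounds this part by a multiple of $\int_X(-\f)^{p}\,\om_\f^n$, the $j=0$ term of $\rm{Ch}_p$. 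For large levels ($t>2$) only $\d=1$ is available, so Lemma \ref{lem:Cap} gives $C_{\om}(\{\f<-t-1\})\le\int_{\{\f<-t\}}\MA(\f)$ and Fubini now produces the \emph{pure} energy $\int_X(-\f)^{n+p}\om_\f^n$. It remains to dominate this by the mixed Choquet energy, and this is the crux of the theorem: the bound $\int_X(-\f)^{n+p}\om_\f^n\le 2^{n+p}\rm{Ch}_p(\f)$ is false pointwise (for $-\f$ large the density of $(-\f)^n\om_\f^n$ exceeds that of $[(-\f)\om+\om_\f]^n$), so it must come from integration by parts: writing $\om_\f^n=\om_\f^{n-1}\wedge(\om+dd^c\f)$ and using $\int_X(-\f)^a\,dd^c\f\wedge S=a\int_X(-\f)^{a-1}d\f\wedge d^c\f\wedge S\ge 0$ for a positive closed current $S$, one trades powers of $\om_\f$ for powers of $\om$ plus non-negative gradient terms, which are reabsorbed into the mixed terms of $\rm{Ch}_p$. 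Obtaining the sharp constant here is the main difficulty, and the constraint $\d\le 1$ in Lemma \ref{lem:Cap} is precisely what forces this indirect route at large levels.

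For \eqref{eq:compIneq2} I would argue in the opposite direction, expanding $\rm{Ch}_p(\f)=\sum_{j=0}^nC_n^j\int_X(-\f)^{p+j}\om_\f^{n-j}\wedge\om^j$ and estimating each mixed term from above by the Choquet integral. The key is a \emph{lower} bound for the capacity from an explicit competitor: for $t\ge 1$ the function $u:=\max(\f/(2t),-1)$ lies in $PSH(X,\om)$ with $-1\le u\le 0$, and on the dyadic shell $\{-2t<\f<-t\}$ one has $\om_{u}=(1-\tfrac1{2t})\om+\tfrac1{2t}\om_\f$, so that $\om_u^n$ contains every mixed term $\om^j\wedge\om_\f^{n-j}$ with weight $\sim(2t)^{-(n-j)}$. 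Evaluating $C_{\om}(\{\f<-t\})\ge\int_{\{-2t<\f<-t\}}\MA(u)$ and integrating against $t^{n+p-1}\,dt$, a Fubini computation (for $\f(x)=-a$ the inner integral $\int_{a/2}^{a}t^{p+j-1}\,dt$ reconstitutes $(-\f)^{p+j}$ up to the factor $1-2^{-(p+j)}\ge\tfrac12$) shows each mixed term is controlled by a multiple of $\int_X(-\f)^{n+p}\,dC_{\om}$; summing the $n+1$ terms with their binomial coefficients produces the constant $(n+1)2^n$, while the region $\{-\f\le 1\}$, where the competitor degenerates, contributes the additive $V_{\om}(X)$ (using $\int_X\om^j\wedge\om_\f^{n-j}=V_{\om}(X)$). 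The delicate points here are the Monge-Amp\`ere lower bound on the shells and the constant bookkeeping, but the genuinely hard step of the whole theorem is the integration-by-parts energy comparison in \eqref{eq:compIneq1}.
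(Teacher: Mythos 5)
Your overall architecture (layer--cake formula, split into small and large levels, reduction to bounded $\f$ by monotone approximation) matches the paper, and your argument for (\ref{eq:compIneq2}) is essentially the paper's: there too the rescaled truncation $\psi_t=\max(\f/t,-1)$ is used as a capacity competitor, via $C_n^j\,\om_{\psi_t}^{n-j}\wedge\om^j\le(\om_{\psi_t}+\om)^n=2^n(\om+dd^c(\psi_t/2))^n$, rather than your dyadic shells; both versions work. The genuine problem is in your treatment of (\ref{eq:compIneq1}) at large levels.

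There you apply Lemma \ref{lem:Cap} with $\d=1$, which after Fubini produces the \emph{pure} energy $\int_X(-\f)^{n+p}\MA(\f)$, and you then plan to dominate it by $\mathrm{Ch}_p(\f)$ via integration by parts. This step cannot be completed: no constant $C_{n,p}$ makes $\int_X(-\f)^{n+p}\MA(\f)\le C_{n,p}\,\mathrm{Ch}_p(\f)$ hold uniformly over bounded $\f\le 0$. Indeed, such a uniform bound applied to the canonical approximants $\max(\f,-k)$ would force $\mathcal Ch^p(X,\om)\subset\mathcal E^{n+p}(X,\om)$, contradicting Corollary \ref{cor:comparaison} and Example \ref{exa:contre}: for $\f(x,y)=u(x)+v(y)$ on $\C\P^{n-1}\times\C\P^1$ with $u$ smooth, $u\le 0$, and $v\in\mathcal E^{p+n-1}\setminus\mathcal E^{p+n}$, one has $\mathrm{Ch}_p(\f)<+\infty$ while $\int_X(-\f)^{n+p}\om_\f^n\ge n\left(\int\a_u^{n-1}\right)\int(-v)^{n+p}\b_v=+\infty$. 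Concretely, your integration by parts gives $\int(-\f)^{n+p}\om_\f^n=\int(-\f)^{n+p}\om\wedge\om_\f^{n-1}+(n+p)\int(-\f)^{n+p-1}d\f\wedge d^c\f\wedge\om_\f^{n-1}$: the first term still carries the full exponent $n+p$ (the corresponding mixed term of $\mathrm{Ch}_p$ only has $p+1$), and the gradient term is a nonnegative \emph{addition}, so discarding it yields a lower bound, not the upper bound you need. The fix is the paper's: for $t\ge 1$ do not use Lemma \ref{lem:Cap} at all; instead note $\{\f<-2t\}\subset\{\f/t<u-1\}\subset\{\f<-t\}$ for any competitor $-1\le u\le 0$, and apply the comparison principle to $\psi_t:=\f/t$, whose Monge--Amp\`ere measure is dominated by $(t^{-1}\om_\f+\om)^n$. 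This gives $t^nC_\om(\{\f<-2t\})\le\sum_jC_n^jt^{j}\int_{\{\f<-t\}}\om_\f^{n-j}\wedge\om^j$, and the factor $t^{-(n-j)}$ in front of $\om_\f^{n-j}\wedge\om^j$ is precisely what lowers the exponent on $(-\f)$ from $n+p$ to $p+j$ after integrating in $t$. Lemma \ref{lem:Cap} is then only needed for $0<t\le1$, where it produces the harmless $j=0$ term $\int(-\f)^p\om_\f^n$.
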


\begin{proof}
By Lemma \ref{lem:continuity} and the continuity properties for the Monge-Amp\`ere operators,
it suffices to prove the estimates  (\ref{eq:compIneq1}) and  (\ref{eq:compIneq2}) when
$0 \geq \f \in  PSH (X,\omega) \cap L^{\infty} (X)$.
Now
\begin{eqnarray*}
\int_X (-\f)^{n + p} d C_{\omega} = (n + p) \int_0^{+ \infty} t^{n + p - 1} C_{\omega} (\{ \f \leq - t\}) d t.
\end{eqnarray*}

Fix  $t \geq 1$ and $u \in PSH (X,\omega)$ such that $- 1 \leq u \leq 0$.
Observe that $\f \slash t \in  PSH^- (X,\omega) \cap L^{\infty}(X)$ and
$$
\{\f < - 2 t\} \subset \{\f \slash t < u - 1\} \subset \{ \f < - t\}.
$$

Set $\psi_t := \f \slash t$. This is a bounded $\omega$-psh function in $X$ such that $\omega + dd^c \psi_t \leq t^{- 1} \omega_\f + \omega$.
The comparison principle (Proposition \ref{pro:CP}) yields
$$
\int_{ \{\f < - 2 t\}} \omega_u^n \leq \int_{ \{\psi_t < u - 1\}} \omega_u^n \leq
\int_{ \{\f < - t\}} (t^{- 1} \omega_\f + \omega)^n.
$$
Since
$
(t^{- 1} \omega_\f + \omega)^n = \sum_{j = 0}^n C_n^j t^{-n+j} \omega_\f^{n - j} \wedge \omega^j,
$
we infer, for all $t \geq 1$,
$$
t^n C_{\omega} (\{\f < - 2 t\}) \leq \sum_{j = 0}^n
C_n^j t^{j} \int_{\{\f < - t\}} \omega_{\f}^{n - j} \wedge \omega^j.
$$

 It follows on the other hand  from Lemma \ref{lem:Cap} that for $0 < t \leq 1$,
 $$
 t^n C_{\omega} (\{\f < - 2 t\}) \leq  \int_{\{\f < - t\}} \omega_\f^n.
 $$
Thus for all $t > 0$
$$
  t^{n + p -1} C_{\omega} (\{\f < - 2 t\}) \leq 2 \sum_{j = 0}^n C_n^j (p+ j)  t^{p +j - 1} \int_{\{\f < - t\}} \omega_{\f}^{n - j} \wedge \omega^j,
$$
hence
$$
\int_X (- \f)^{n + p} d C_{\omega} \leq (n + 1) 2^{n + p + 1} \rm{Ch}_p (\f).
$$

Conversely fix $\f \in PSH (X,\omega) \cap L^{\infty} (X)$. Then for $j = 0, \cdots, n$
{\small
 $$
  \int_X (- \f)^{p + j} \omega_\f^{n - j} \wedge \omega^j =  V_{\omega} (X)
   +  (p + j) \int_1^{+ \infty} t^{p + j - 1} \omega_\f^{n - j} \wedge \omega^j (\{\f \leq - t\}).
$$
}

Observe that if we set $\f_t := \sup \{\f , - t \}$, then
$$
\int_{\{\f \leq - t\}} \omega_\f^{n - j} \wedge \omega^j = \int_{\{\f \leq - t\}} \omega_{\f_t}^{n - j} \wedge \omega^j.
$$
Since for $t \geq 1$,  $t^{- 1} \omega_{\f_t} \leq \omega_{\psi_t}$, where
 $\psi_t := \sup \{\f \slash t, - 1\}$ , we infer
$$
\int_{\{\f \leq - t\}} \omega_\f^{n - j} \wedge \omega^j \leq t^{n - j} \int_{\{\f \leq - t\}} \omega_{\psi_t}^{n - j} \wedge \omega^j.
$$
Now
$$
 C_n^j \omega_{\psi_t}^{n - j} \wedge \omega^j \leq  (\omega_{\psi_t} + \omega)^n = 2^n (\omega + dd^c (\psi_t \slash 2))^n
$$
and  $- 1 \leq \psi_t \leq 0$, therefore
$$
\sum_{j = 0}^n  C_n^j \int_0^{+ \infty} t^{p + j} \omega_{\f}^{n - j} \wedge \omega^j \leq 2^n V_{\omega} (X) + n 2^n \int_X (-\f)^{n + p} d C_{\omega},
$$
hence
$$
\sum_{j = 0}^n  C_n^j \int_X (-\f)^{p + j} \omega_{\f}^{n - j} \wedge \omega^j \leq 2^n V_{\omega} (X) + n 2^n \int_X (-\f)^{n + p} d C_{\omega}.
$$
\end{proof}

\begin{cor}    \label{cor:comparaison}
$$
\mathcal E^{p + n - 1} (X,\omega) \subset \mathcal Ch^p(X,\omega) \subset \mathcal E^p (X,\omega).
$$
\end{cor}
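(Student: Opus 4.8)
The plan is to deduce both inclusions from the characterization in Theorem \ref{thm:caract}, which identifies $\mathcal Ch^p(X,\omega)$ with $\{\f\in PSH(X,\omega): \mathrm{Ch}_p(\f)<+\infty\}$, together with the expansion $\mathrm{Ch}_p(\f)=\sum_{j=0}^n C_n^j\, I_j(\f)$ where $I_j(\f):=\int_X(-\f)^{p+j}\,\omega_\f^{n-j}\wedge\omega^j$. Since $\mathrm{Ch}_p(\f)<+\infty$ is equivalent to the finiteness of each $I_j(\f)$, the whole matter reduces to comparing these mixed Monge--Amp\`ere energies with the pure energies $\int_X(-\f)^q\,\omega_\f^n$ defining the classes $\mathcal E^q(X,\omega)$. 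Throughout I normalize $\f\le 0$ and work first with the canonical approximants $\f_k=\max(\f,-k)$, passing to the limit at the end via the monotone convergence of Lemma \ref{lem:continuity} and Proposition \ref{pro:convex}.

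For the inclusion $\mathcal Ch^p(X,\omega)\subset\mathcal E^p(X,\omega)$, I first observe that $I_0(\f)=\int_X(-\f)^p\,\omega_\f^n\le\mathrm{Ch}_p(\f)<+\infty$ whenever $\f\in\mathcal Ch^p(X,\omega)$, because every term $C_n^j I_j(\f)$ is non-negative and $C_n^0=1$. It then only remains to check $\f\in\mathcal E(X,\omega)$, for $\int_X(-\f)^p\,\omega_\f^n$ equals $V_\omega(X)\int_X(-\f)^p\,MA(\f)$ once $\omega_\f^n$ is known to carry full mass. Membership in $\mathcal E(X,\omega)$ follows from the fast decay of the capacity: since the non-increasing function $t\mapsto C_\omega(\{\f\le -t\})$ satisfies $\int_0^{+\infty}t^{p+n-1}C_\omega(\{\f\le -t\})\,dt<+\infty$, comparing the tail $\int_{t/2}^{t}s^{p+n-1}C_\omega(\{\f\le -s\})\,ds$ with $c\,t^{p+n}C_\omega(\{\f\le -t\})$ forces $t^{p+n}C_\omega(\{\f\le -t\})\to 0$, hence a fortiori $t^{n}C_\omega(\{\f\le -t\})\to 0$; by the capacitary characterization of $\mathcal E(X,\omega)$ from \cite{GZ07} this guarantees $\f\in\mathcal E(X,\omega)$, and therefore $\f\in\mathcal E^p(X,\omega)$.

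For the inclusion $\mathcal E^{p+n-1}(X,\omega)\subset\mathcal Ch^p(X,\omega)$, I must show that $\f\in\mathcal E^{p+n-1}(X,\omega)$ makes every $I_j(\f)$ finite. The top term $I_n(\f)=\int_X(-\f)^{p+n}\omega^n$ is finite for \emph{any} $\f\in PSH(X,\omega)$, by the standard $L^q(dV)$-integrability of $\omega$-plurisubharmonic functions for all finite $q$. For $0\le j\le n-1$ I split over $\{-\f\ge 1\}$ and $\{-\f<1\}$: on the latter $(-\f)^{p+j}\le 1$ and the mass is cohomological, contributing at most $\int_X\omega_\f^{n-j}\wedge\omega^j=V_\omega(X)$; on the former $(-\f)^{p+j}\le(-\f)^{p+n-1}$ since $j\le n-1$. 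The key step is then the integration-by-parts monotonicity
$$
\int_X(-\f)^{a}\,\omega_\f^{n-j}\wedge\omega^j\ \le\ \int_X(-\f)^{a}\,\omega_\f^{n-j+1}\wedge\omega^{j-1},
$$
valid for $a\ge 1$, whose difference equals $a\int_X(-\f)^{a-1}\,d\f\wedge d^c\f\wedge\omega_\f^{n-j}\wedge\omega^{j-1}\ge 0$ after one integration by parts. Iterating it with $a=p+n-1$ from $\omega_\f^{n-j}\wedge\omega^j$ up to $\omega_\f^n$ bounds $I_j(\f)$ by $V_\omega(X)+\int_X(-\f)^{p+n-1}\omega_\f^n$, finite by hypothesis; summing over $j$ gives $\mathrm{Ch}_p(\f)<+\infty$.

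The main obstacle is this first inclusion, and specifically the sharpness of the exponent $p+n-1$: a direct use of Lemma \ref{lem:Cap} only bounds $\int_0^{+\infty}t^{p+n-1}C_\omega(\{\f\le -t\})\,dt$ by a multiple of $\int_X(-\f)^{p+n}\omega_\f^n$, which would yield the weaker statement $\mathcal E^{p+n}(X,\omega)\subset\mathcal Ch^p(X,\omega)$. Gaining the extra power genuinely requires the mixed-energy decomposition and the integration-by-parts monotonicity above. The remaining technical point is to justify these manipulations for unbounded $\f\in\mathcal E(X,\omega)$: the integration by parts is first carried out on the bounded approximants $\f_k$, and one lets $k\to+\infty$ using the continuity of mixed Monge--Amp\`ere energies along decreasing sequences in $\mathcal E(X,\omega)$ together with the fact that all mixed products $\omega_\f^{n-j}\wedge\omega^j$ carry full mass $V_\omega(X)$ for such $\f$; finiteness of the right-hand side under the hypothesis $\f\in\mathcal E^{p+n-1}(X,\omega)$ is what legitimizes the passage to the limit.
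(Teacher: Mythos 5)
Your proof is correct, and on both inclusions it takes a route genuinely different from (and somewhat more complete than) the paper's. For $\mathcal E^{p+n-1}(X,\omega)\subset\mathcal Ch^p(X,\omega)$ the paper uses a one-line trick: after normalizing $\f\leq -1$ it observes that $C_n^j\,\omega_\f^{n-j}\wedge\omega^j\leq(\omega_\f+\omega)^n=2^n\,\omega_{\f/2}^n$, so each mixed term is dominated by $2^n\int_X(-\f)^{p+j}\omega_{\f/2}^n\leq C\int_X(-\f/2)^{p+n-1}\omega_{\f/2}^n$, which is finite because $\f/2\in\mathcal E^{p+n-1}(X,\omega)$ together with $\f$. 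You instead use the energy monotonicity $\int_X(-\f)^{a}\omega_\f^{n-j}\wedge\omega^j\leq\int_X(-\f)^{a}\omega_\f^{n-j+1}\wedge\omega^{j-1}$ obtained by one integration by parts; this is the fundamental inequality of \cite{GZ07}, which the paper itself invokes later (in the proof that $\mathcal E^p(X,\omega)\subset L^q(\mu)$ is equivalent to a quantitative bound), so your route is equally legitimate and reduces each mixed energy directly to $\int_X(-\f)^{p+n-1}\omega_\f^n$ without passing through $\f/2$; your separate treatment of the $j=n$ term via $PSH(X,\omega)\subset L^q(X)$ fills a detail the paper glosses over. For $\mathcal Ch^p(X,\omega)\subset\mathcal E^p(X,\omega)$ the paper only records $\int_X(-\f)^p\omega_\f^n\leq\mathrm{Ch}_p(\f)$, leaving implicit why this forces $\f\in\mathcal E(X,\omega)$; you supply this via the decay $t^{p+n}C_\omega(\{\f\leq-t\})\to0$ and the capacitary criterion for membership in $\mathcal E(X,\omega)$, an alternative (and valid) justification to the one the paper presumably intends, namely the criterion recalled in Section 2.1 that uniform boundedness of $\int_X(-\f_j)^pMA(\f_j)$ along the canonical approximants already characterizes $\mathcal E^p(X,\omega)$. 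Your closing observation that a naive use of Lemma \ref{lem:Cap} would only yield $\mathcal E^{p+n}(X,\omega)\subset\mathcal Ch^p(X,\omega)$ correctly pinpoints why the mixed-energy decomposition is needed to reach the sharp exponent $p+n-1$.
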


\begin{proof}
The second inclusion follows from the fact that
$$
\int_X (- \f)^p \omega_\f^n \leq \rm{Ch}_p (\f).
$$

To prove the first inclusion we can assume that $\f \leq - 1$.  Observe that when $\f \in \mathcal E^{p + n - 1} (X,\omega)$ so does $\f \slash 2$ and for $j = 1, \cdots, n-1$
$$
 \int_X (- \f)^{p + j} \omega_\f^{n - j} \wedge \omega^j \leq 2^n \int_X (- \f)^{p + j} \omega_{\f\slash 2}^{n}
$$
and for $j = 0$, we always have $ \int_X (- \f)^{p} \omega_\f^{n} < + \infty$.
\end{proof}

\section{Range of the Monge-Amp\`ere operator}

In this section $X$ is a compact K\"ahler manifold equipped with  a semi-positive form
$\omega$ such that $\int_X \omega^n=1$, where $n=\dim_{\C} X$.

\subsection{The Monge-Amp\`ere operator on  $\mathcal Ch^p (X,\omega)$}

\begin{lem}
Fix  $0 \geq \f, \psi \in \mathcal Ch^p (X,\omega)$ and $0 \leq j \leq n$. Then
$$
\int_X (-\f)^{p+j} \omega_{\psi}^{n - j} \wedge \omega^j \leq 2^{p+j}  \int_X (-\f)^{p+j} \omega_{\f}^{n - j} \wedge \omega^j +  2^{p+j} \int_X (-\psi)^{p+j} \omega_{\psi}^{n - j} \wedge \omega^j.
$$
\end{lem}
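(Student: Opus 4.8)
The plan is to reduce the mixed quantity on the left to the two \emph{pure} energies on the right by a pointwise splitting of the weight, followed by a single integration by parts that compares the two Monge--Amp\`ere measures. First I would reduce to the case where $\f$ and $\psi$ are bounded: replacing them by $\max(\f,-k)$ and $\max(\psi,-k)$ and letting $k\to+\infty$, the measures $\omega_\psi^{n-j}\wedge\omega^j$ converge while the weights $(-\f)^{p+j}$ increase, so by Lemma \ref{lem:continuity} and the continuity of the Monge--Amp\`ere operator along decreasing sequences (exactly as in the proof of Theorem \ref{thm:caract}) it suffices to argue for $\f,\psi\in PSH(X,\omega)\cap L^\infty(X)$. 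The case $j=n$ is trivial, since then the inequality reads $\int_X(-\f)^{p+n}\omega^n\le 2^{p+n}[\cdots]$, so I may assume $0\le j\le n-1$.

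The pointwise step is as follows. On $\{\f\ge\psi\}$ one has $(-\f)\le(-\psi)$, while on $\{\f<\psi\}$ one writes $-\f=(\psi-\f)+(-\psi)$ with both summands nonnegative; the elementary bound $(a+b)^{q}\le 2^{q}(a^{q}+b^{q})$ with $q=p+j\ge 1$ then gives, everywhere on $X$,
\[
(-\f)^{p+j}\le 2^{p+j}(-\psi)^{p+j}+2^{p+j}\,{\bf 1}_{\{\f<\psi\}}(\psi-\f)^{p+j}.
\]
Integrating against $\omega_\psi^{n-j}\wedge\omega^j$, the first term contributes exactly $2^{p+j}\int_X(-\psi)^{p+j}\omega_\psi^{n-j}\wedge\omega^j$, which is the $\psi$-term of the statement. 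It therefore remains to control the cross term by the $\f$-energy, i.e. to prove
\[
\int_{\{\f<\psi\}}(\psi-\f)^{p+j}\,\omega_\psi^{n-j}\wedge\omega^j\ \le\ \int_X(-\f)^{p+j}\,\omega_\f^{n-j}\wedge\omega^j .
\]

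For this I set $g:=\psi-\f$, so $\{\f<\psi\}=\{g>0\}$ and $g\ge 0$ there, and I compare the two measures on $\{g>0\}$. Writing $\omega_\psi-\omega_\f=dd^c g$ and telescoping,
\[
\omega_\psi^{n-j}\wedge\omega^j-\omega_\f^{n-j}\wedge\omega^j=dd^c g\wedge T,\qquad
T:=\omega^{j}\wedge\sum_{a=0}^{n-j-1}\omega_\psi^{a}\wedge\omega_\f^{\,n-j-1-a},
\]
where $T$ is a positive closed current (this is where $j\le n-1$ is used). Integrating by parts on $\{g>0\}$, where $g^{p+j}$ vanishes on the boundary $\{g=0\}$,
\[
\int_{\{g>0\}}g^{p+j}\,dd^c g\wedge T=-(p+j)\int_{\{g>0\}}g^{p+j-1}\,dg\wedge d^c g\wedge T\ \le\ 0,
\]
because $dg\wedge d^c g\wedge T\ge 0$. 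Hence $\int_{\{g>0\}}g^{p+j}\omega_\psi^{n-j}\wedge\omega^j\le\int_{\{g>0\}}g^{p+j}\omega_\f^{n-j}\wedge\omega^j$, and the required cross-term bound follows from $g=\psi-\f\le-\f$ (as $\psi\le 0$) together with enlarging the domain of integration to all of $X$. Combining the three displays proves the Lemma.

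The step I expect to be the main obstacle is this integration by parts on the sublevel set $\{g>0\}$: one must justify that the boundary contribution along $\{g=0\}$ vanishes and that the mixed currents $g^{p+j-1}\,dg\wedge d^c g\wedge\omega_\psi^{a}\wedge\omega_\f^{\,n-j-1-a}\wedge\omega^j$ are well defined and nonnegative. In the bounded regime this is standard Bedford--Taylor calculus (approximate $g$ by $\max(g,\e)-\e$ and let $\e\downarrow 0$), but it is the only non-formal point; everything else is pointwise algebra and the reduction already invoked in Theorem \ref{thm:caract}.
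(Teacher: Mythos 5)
Your strategy is sound and lands on the right constant, and it is in essence a pointwise repackaging of the paper's own argument. The paper splits at the level of sublevel sets: with $\chi(t)=-(-t)^{p+j}$ it writes $\int_X(-\varphi)^{p+j}\omega_\psi^{n-j}\wedge\omega^j=\int_{-\infty}^0\chi'(t)\,\omega_\psi^{n-j}\wedge\omega^j(\varphi<t)\,dt$, rescales $t\mapsto 2t$, uses the inclusion $\{\varphi<2t\}\subset\{\varphi<\psi+t\}\cup\{\psi<t\}$, and then applies the (mixed) comparison principle $\omega_\psi^{n-j}\wedge\omega^j(\varphi<\psi+t)\le\omega_\varphi^{n-j}\wedge\omega^j(\varphi<\psi+t)$; you instead split the weight pointwise via $(-\varphi)^{p+j}\le 2^{p+j}(-\psi)^{p+j}+2^{p+j}\mathbf{1}_{\{\varphi<\psi\}}(\psi-\varphi)^{p+j}$ and are then left with a weighted comparison inequality for $g=\psi-\varphi$. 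These are the same two ingredients in a different order. The one step you rightly flag as non-formal is also the one that does not go through as written: $g$ is only a difference of bounded $\omega$-psh functions, $\max(g,0)^{p+j}$ is not obviously again such a difference, so the Bedford--Taylor integration by parts on $\{g>0\}$ (including the vanishing of the boundary contribution and the positivity of $g^{p+j-1}dg\wedge d^cg\wedge T$) is not available off the shelf; moreover the proposed $\max(g,\varepsilon)-\varepsilon$ regularization does not converge monotonically, since $g$ is a difference of two decreasing approximations. The clean repair is to prove your weighted inequality by the layer-cake formula
$$
\int_{\{g>0\}}g^{q}\,d\mu=q\int_0^{+\infty}s^{q-1}\,\mu\bigl(\{\varphi<\psi-s\}\bigr)\,ds,
$$
applying the unweighted mixed comparison principle to the pair $\varphi$, $\psi-s$ for each $s>0$ --- at which point your proof collapses into the paper's. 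This reformulation also covers the case $0<p+j<1$ mentioned in the paper, which your integration by parts would lose since $t\mapsto \max(t,0)^{q}$ is then not $C^1$ at $0$. Everything else --- the reduction to bounded functions, the trivial case $j=n$, the telescoping identity, and the final bound $g\le-\varphi$ on $\{g>0\}$ --- is correct.
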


\begin{proof}
Set $\chi(t)=-(-t)^{p+j}$.
The proof is slightly different if $j=0$ and $0 < p<1$ or if $p+j \geq 1$
($\chi$ is convex or concave). We only treat the second case and leave the modifications to the reader.
Observe that
$0 \leq \chi'(2t) =M  \chi'(t)$, with $M=2^{p+j-1}$, hence
\begin{eqnarray*}
\int_X (-\chi) \circ \f \, \om_{\p}^{n-j} \wedge \omega^j&=&
\int_{-\infty}^0 \chi'(t) \om_{\p}^{n-j} \wedge \omega^j (\f <t) dt \\
&\leq&  2M\int_{-\infty}^0 \chi'(t) \om_{\p}^{n-j} \wedge \omega^j (\f <2t) dt.
\end{eqnarray*}
Now $(\f <2t) \subset (\f<\p+t) \cup (\p<t)$, hence
$0 \leq \chi'(2t) =M  \chi'(t)$, with $M=2^{p+j-1}$, hence
\begin{eqnarray*}
\int_X (-\chi) \circ \f \, \om_{\p}^{n-j} \wedge \omega^j&\leq&
2M \int_{-\infty}^0 \chi'(t) \om_{\p}^{n-j} \wedge \omega^j (\f <\p+t) dt \\
&+&  2M \int_X (-\psi)^{p+j} \omega_{\psi}^{n - j} \wedge \omega^j.
\end{eqnarray*}
The comparison principle yields
$\om_{\p}^{n-j} \wedge \omega^j (\f<\p+t) \leq  \om_{\f}^{n-j} \wedge \omega^j (\f<\p+t)$.
The desired inequality follows by observing that
$(\f<\p+t) \subset (\f<t)$.
\end{proof}

\begin{lem}
Let $\mu$ be a probability measure. Then
$
\mathcal Ch^p (X,\omega) \subset L^q(\mu)
$
if and only if there exists $ C_\mu>0$ such that
$\forall \f \in \mathcal Ch^p (X,\omega) \text{ with } \sup_X \f=-1, \; $
$$
\int_X (-\f)^q \, d\mu \leq C_\mu \left[ \rm{Ch}_p(\f) \right]^{\frac{q}{p+n}}.
$$
\end{lem}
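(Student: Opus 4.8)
The plan is to treat the two implications separately, the forward one being elementary and the converse being the substantial part. Assuming the stated inequality, I would reduce an arbitrary $\f \in \mathcal Ch^p (X,\omega)$ to the normalized case by setting $\tilde\f := \f - \sup_X \f - 1$. This is again $\omega$-psh with $\sup_X \tilde\f = -1$, and it still lies in $\mathcal Ch^p (X,\omega)$: adding a constant only shifts the sublevel sets, so $\int_X (-\tilde\f)^{p+n}\,dC_\omega$ differs from $\int_X (-\f)^{p+n}\,dC_\omega$ by a finite amount (here one uses $C_\omega (X)=1$, since $u\equiv 0$ is admissible in the definition of $C_\omega$). Applying the hypothesis to $\tilde\f$ bounds $\int_X (-\tilde\f)^q\,d\mu$, and since $\mu$ is a probability measure, re-adding the constant keeps $\f\in L^q(\mu)$.

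For the converse I would argue by contradiction. Suppose $\mathcal Ch^p (X,\omega)\subset L^q(\mu)$ but that no constant $C_\mu$ works. Then
\[
\sup\left\{ \frac{\int_X(-\f)^q\,d\mu}{[\mathrm{Ch}_p(\f)]^{q/(p+n)}} \ ;\ \f\in\mathcal Ch^p (X,\omega),\ \sup_X\f=-1 \right\}=+\infty,
\]
so for any prescribed sequence $R_j\to+\infty$ (to be fixed later, growing fast) I can select normalized $\f_j$ with $\int_X(-\f_j)^q\,d\mu > R_j[\mathrm{Ch}_p(\f_j)]^{q/(p+n)}$. Using Theorem \ref{thm:caract} I replace $\mathrm{Ch}_p(\f_j)$ by the comparable quantity $\int_X(-\f_j)^{p+n}\,dC_\omega$; comparability is two-sided for normalized functions because $\f_j\le -1$ forces $\int_X(-\f_j)^{p+n}\,dC_\omega\ge C_\omega(X)=1$, which absorbs the additive term $V_\omega(X)$ in (\ref{eq:compIneq2}). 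I then rescale: since $t_j:=\bigl(\int_X(-\f_j)^{p+n}\,dC_\omega\bigr)^{1/(p+n)}\ge 1$, the function $\psi_j:=\f_j/t_j$ is still $\omega$-psh and $\le 0$, satisfies $\int_X(-\psi_j)^{p+n}\,dC_\omega=1$, and a short computation gives $\int_X(-\psi_j)^q\,d\mu\gtrsim R_j$.

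Next I would glue the $\psi_j$ into a single test function. With weights $a_j=2^{-j}$ I set $\Phi:=\sum_{j}a_j\psi_j$; since $\sum_j a_j=1$ and each $\psi_j\le 0$, every partial sum is $\omega$-psh with supremum in $[-1,0]$, and the partial sums decrease to $\Phi$. The crucial point is to keep the Choquet energy of $\Phi$ finite. From the inclusion $\{-\Phi\ge t\}\subset\bigcup_j\{a_j(-\psi_j)\ge c_j t\}$, valid for any $c_j>0$ with $\sum_j c_j=1$, together with the subadditivity of the Choquet capacity $C_\omega$ and the change of variables $\int_X f^{p+n}\,dC_\omega=(p+n)\int_0^\infty t^{p+n-1}C_\omega(\{f\ge t\})\,dt$, I obtain
\[
\int_X(-\Phi)^{p+n}\,dC_\omega\le\sum_j c_j^{-(p+n)}a_j^{p+n}\int_X(-\psi_j)^{p+n}\,dC_\omega=\sum_j c_j^{-(p+n)}\,a_j^{p+n}.
\]
Choosing for instance $c_j=2^{-j/2}/\sum_\ell 2^{-\ell/2}$ makes the right-hand side a convergent geometric-type series. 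Thus the partial sums have uniformly bounded Choquet energy and, having suprema in $[-1,0]$, are uniformly bounded in $L^1(X)$; Proposition \ref{pro:convex} then gives $\Phi\in\mathcal Ch^p (X,\omega)$.

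It remains to contradict $\Phi\in L^q(\mu)$. Since every summand of $-\Phi=\sum_j a_j(-\psi_j)$ is non-negative, for each $k$ one has $\int_X(-\Phi)^q\,d\mu\ge a_k^q\int_X(-\psi_k)^q\,d\mu\gtrsim 2^{-kq}R_k$. Fixing now the prescribed sequence to be, say, $R_k=2^{2kq}$ makes this lower bound $\gtrsim 2^{kq}\to+\infty$, so $\int_X(-\Phi)^q\,d\mu=+\infty$ and $\Phi\notin L^q(\mu)$, the desired contradiction. I expect the heart of the argument to be exactly this final balancing: finiteness of the energy of the infinite combination forces the weights $a_j$ (and the rescalings $t_j$) to decay, which by itself would also suppress the $L^q(\mu)$-mass of each piece; the two requirements are reconciled only by exploiting the freedom to prescribe an arbitrarily fast-growing violation $R_j$, available precisely because the failure of the estimate makes the supremum above infinite rather than merely large.
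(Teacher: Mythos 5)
Your proof is correct, but it takes a genuinely different route from the paper at the one point where the argument is delicate. Both proofs argue by contradiction, extract a normalized violating sequence $(\f_j)$, rescale by constants $\e_j\in(0,1]$, and form a weighted sum $\sum_j 2^{-j}\e_j\f_j$. The paper sets $\e_j=M_j^{-1/(n+p)}$ with $M_j=\mathrm{Ch}_p(\f_j)$; since $\mathrm{Ch}_p$ is not homogeneous under $\f\mapsto\e\f$ (because $\omega_{\e\f}=\e\,\omega_\f+(1-\e)\omega$), it must then re-estimate every mixed term of $\mathrm{Ch}_p(\e_j\f_j)$ via H\"older's inequality --- the technical core of its proof --- and it needs a separate case when $M_j$ stays bounded. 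You instead normalize the Choquet integral $t_j^{p+n}=\int_X(-\f_j)^{p+n}\,dC_\omega$, which \emph{is} exactly homogeneous of degree $p+n$ under scaling by constants, and you use Theorem \ref{thm:caract} (only inequality (\ref{eq:compIneq1}) is really needed, together with $t_j\geq1$, which follows from $\f_j\leq-1$ and $C_\omega(X)=1$) to transfer the violated estimate to this quantity; the rescaled $\psi_j=\f_j/t_j$ then have unit Choquet energy for free, so no H\"older computation and no case distinction are required. You also make explicit, via countable subadditivity of $C_\omega$ and the union bound $\{-\Phi\geq t\}\subset\bigcup_j\{a_j(-\psi_j)\geq c_jt\}$, why the infinite weighted sum keeps finite Choquet energy --- a point the paper delegates to Proposition \ref{pro:convex} without a quantitative bound on the partial sums. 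The only imprecision is your claim that the partial sums have supremum in $[-1,0]$: this need not hold (the maxima of the $\psi_j$ may sit at different points), but the uniform $L^1$ bound you want follows anyway from $\sup_X\psi_j\in[-1,0]$ for each $j$ and the standard $L^1$-compactness of $\{u\in PSH(X,\omega)\ ;\ -1\leq\sup_Xu\leq0\}$. Net effect: your argument routes through Theorem \ref{thm:caract} but replaces the paper's multi-term H\"older estimate by an exact scaling identity, and treats both of the paper's cases uniformly.
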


\begin{proof}
One implication is obvious. Assume that $\mathcal Ch^p (X,\omega) \subset L^q(\mu)$, we want to establish
the quantitative integrability property.
Assume on the contrary that there exists a sequence $\f_j \in \mathcal Ch^p (X,\omega) $ with
$\sup_X \f_j=-1$ and
$$
\int_X (-\f_j)^q \, d\mu \geq 4^{jq} \rm{Ch}_p(\f_j)^{\frac{q}{p+n}}.
$$

Assume first that $M_j:=\rm{Ch}_p(\f_j)$ is uniformly bounded.
Note that $M_j \geq 1$ since $\f_j \leq -1$.
It follows from Proposition \ref{pro:convex}
that $\f=\sum_{j \geq 1} 2^{-j} \f_j$ belongs to $\mathcal Ch^p (X,\omega)$. Now for all $k \geq 1$,
$$
\int_X (-\f)^q \, d\mu \geq 2^{-kq} \int_X (-\f_k)^q \, d\mu
\geq 2^{kq},
$$
hence $\int_X (-\f)^q \, d\mu=+\infty$, a contradiction.

Extracting and relabelling we can thus assume  $M_j:=\rm{Ch}_p(\f_j) \rightarrow +\infty$.
Set $\p_j=\e_j \f_j$ with $\e_j=M_j^{-\frac{1}{n+p}}$ and $\p=\sum_{j \geq 1} 2^{-j} \p_j$.
We note again that for all $k \geq 1$,
$$
\int_X (-\p)^q \, d\mu \geq 2^{-kq} \int_X (-\p_k)^q \, d\mu
\geq 2^{kq} \e_k^q M_k^{\frac{q}{p+n}}=2^{kq},
$$
hence $\p \notin L^q(\mu)$. We now show that $\p \in \mathcal Ch^p (X,\omega)$ to get a  contradiction.
It suffices to show that $\rm{Ch}_p(\p_j)$ is uniformly bounded from above. Observe that $\omega_{\p_j}=\e_j \omega_{\f_j}+(1-\e_j) \omega \leq \e_j \omega_{\f_j}+\omega$. We need to control each term
$$
\e_j^{p+n-k}   \int_X (-\f_j)^{p+\ell} \omega_{\f_j}^{n-\ell-k} \wedge \omega^{\ell+k},
$$
where $0 \leq \ell \leq n$ and $0 \leq k \leq n- \ell$. H\"older inequality yields
$$
    \int_X (-\f_j)^{p+\ell} \omega_{\f_j}^{n-\ell-k} \wedge \omega^{\ell+k}
\leq \left(  \int_X (-\f_j)^{p+\ell+k} \omega_{\f_j}^{n-\ell-k} \wedge \omega^{\ell+k}
\right)^{\frac{p+\ell}{p+\ell+k}},
$$
therefore
$$
\rm{Ch}_p(\p_j) \leq C \max_{\ell,k} \left( \e_j^{p+n-k} M_j^{\frac{p+\ell}{p+\ell+k}} \right)
=C \max_{\ell,k} \left( M_j^{-\frac{k(n-\ell-k)}{(p+\ell+k)(n+p)}} \right)
\leq C',
$$
since $\e_j=M_j^{-\frac{1}{n+p}}$.
\end{proof}

The range of the complex Monge-Amp\`ere operator acting on finite energy classes has been
characterized in \cite{GZ07}. The situation is more subtle for Choquet-Monge-Amp\`ere classes.

We now connect the way a non pluripolar measure is dominated by the Monge-Amp\`ere capacity to
integrability properties with respect to Choquet-Monge-Amp\`ere classes:

\begin{prop} \label{prop:range}
Let $\mu$ be  a probability measure on $X$.
If $\mu \leq A \, C_\om^\a$ with  $0<A$ and $ q/(p+n)<\a<1$, then
$$
\mathcal Ch^p (X,\omega) \subset L^q(\mu).
$$

\end{prop}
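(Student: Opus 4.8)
The plan is to combine the tail estimate for the Choquet norm (established right after the definition of $\mathcal Ch^p(X,\omega)$) with the hypothesis $\mu \leq A\, C_\omega^\a$ via the layer-cake formula for $\int_X (-\f)^q\, d\mu$, reducing everything to a convergent integral in $t$. First I would normalize $\sup_X \f = -1$, which by the previous Lemma is the relevant scaling class, and write
\begin{equation*}
\int_X (-\f)^q\, d\mu = q \int_0^{+\infty} t^{q-1} \mu(\{\f \leq -t\})\, dt.
\end{equation*}
Since $\f \leq 0$ only contributes a bounded term on $0 \le t \le 1$, the issue is the tail $t \to +\infty$.

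Next I would insert the capacity domination $\mu(\{\f \leq -t\}) \leq A\, C_\omega(\{\f \leq -t\})^\a$ and then control $C_\omega(\{\f \leq -t\})$ using the tail bound
\begin{equation*}
C_\omega(\{\f \leq -t\}) \leq t^{-p-n} \int_X (-\f)^{p+n}\, dC_\omega \leq t^{-p-n}\, M,
\end{equation*}
where $M := \int_X (-\f)^{p+n}\, dC_\omega$ is finite because $\f \in \mathcal Ch^p(X,\omega) = PSH(X,\omega)\cap \mathcal L^{p+n}(X,C_\omega)$, and is comparable to $\rm{Ch}_p(\f)$ by Theorem \ref{thm:caract}. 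This yields, for the tail,
\begin{equation*}
q\int_1^{+\infty} t^{q-1}\, A\, \bigl(t^{-p-n} M\bigr)^\a\, dt = q A\, M^\a \int_1^{+\infty} t^{q - 1 - \a(p+n)}\, dt.
\end{equation*}
The exponent is $q - 1 - \a(p+n)$, and the integral converges exactly when $q - \a(p+n) < 0$, i.e. when $\a > q/(p+n)$, which is precisely the lower bound assumed on $\a$. Thus the finiteness is guaranteed by the hypothesis, and one obtains $\int_X (-\f)^q\, d\mu < +\infty$, so $\mathcal Ch^p(X,\omega) \subset L^q(\mu)$.

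The main obstacle, and the point requiring care rather than mere bookkeeping, is handling the normalization and the role of the upper bound $\a < 1$: one must ensure that the crude capacity tail estimate $C_\omega(\{\f \leq -t\}) \leq t^{-p-n}M$ is legitimately available (it follows from the displayed inequality after the definition of $\mathcal Ch^p$), and that raising to the power $\a$ does not cost integrability elsewhere. The condition $\a < 1$ ensures $\mu \leq A\, C_\omega^\a$ is a genuine domination compatible with $\mu$ being a probability measure (so that $C_\omega^\a$ dominates rather than is dominated), while $\a > q/(p+n)$ is what forces the tail exponent below $-1$. A cleaner route, yielding the quantitative bound of the preceding Lemma, would be to apply H\"older's inequality directly to $\int t^{q-1}\mu(\cdot)\,dt$ splitting $t^{q-1} = t^{q-1}\cdot 1$ against the weights induced by $C_\omega^\a$, but the layer-cake computation above already suffices to establish the stated inclusion.
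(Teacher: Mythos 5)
Your argument is correct, but it takes a genuinely different route from the paper's. You bound the tail of $\mu(\{\f\leq -t\})$ pointwise via the Chebyshev-type inequality $C_\omega(\{\f\leq -t\})\leq t^{-p-n}\int_X(-\f)^{p+n}\,dC_\omega$ (the display following the definition of $\mathcal Ch^p(X,\omega)$), which turns the tail integral into the explicit convergent integral $\int_1^{+\infty}t^{q-1-\a(p+n)}\,dt$. The paper instead keeps the capacity inside the integral and applies H\"older's inequality with exponents $\tfrac{1}{1-\a}$ and $\tfrac{1}{\a}$, splitting $t^{q-1}C_\omega(\{\f<-t\})^\a$ so that the second factor reconstitutes the full Choquet energy $\int_1^{+\infty}t^{n+p-1}C_\omega(\{\f<-t\})\,dt$; the first factor converges under exactly the same condition $q-\a(p+n)<0$. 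Both routes give the inclusion and an estimate of the form $\int_X(-\f)^q\,d\mu\leq 1+C\,M^\a$ with $M$ the Choquet energy, so neither is quantitatively sharper. Your version is more elementary, and notably it never uses the hypothesis $\a<1$, which the paper's H\"older step does require; on the other hand, your closing remark about the role of $\a<1$ (``genuine domination compatible with $\mu$ being a probability measure'') is not really an explanation and could be dropped --- in your argument that hypothesis is simply unused. One small point worth making explicit in either approach: the reduction to $\sup_X\f=-1$ is harmless because $\mathcal Ch^p(X,\omega)$ and $L^q(\mu)$ are both stable under adding constants ($\mu$ being finite and $\mathcal L^{p+n}(X,C_\omega)$ being a vector space).
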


\begin{proof}
 Let $\f \in {\mathcal Ch}^p(X,\om)$
with $\sup_X \f=-1$. It follows from H\"older inequality that
{\small
\begin{eqnarray*}
\lefteqn{ \! \! \! \! \! \! \! \! \!
0  \leq  \int_X (-\f)^q d\mu =1+ q \int_1^{+\infty} t^{q-1} \mu(\f<-t) dt} \\
&\leq& 1+qA \int_1^{+\infty}t^{q-1} \left[ Cap_{\om}(\f<-t) \right]^{\a} dt \\
&\leq& 1+qA \left[ \int_1^{+\infty} t^{\frac{q-\a(p+n)}{1-\a}-1} dt \right]^{1-\a}
\cdot \left[ \int_1^{+\infty} t^{n+p-1} Cap_{\om}(\f<-t) dt \right]^{\a}.
\end{eqnarray*}
}

The first integral in the last line converges when $q/(p+n)<\a$ since $q-\a(p+n)<0$.
The last one  is bounded from above by definition.
Therefore ${\mathcal Ch}^p(X,\om) \subset L^q(\mu)$.
\end{proof}

We now investigate conditions under which the converse of this result holds.
We start by considering the problem for the finite energy classes $\mathcal{E}^{p}(X,\omega)$ :

\begin{prop}\label{prop:measurecapacity}
If $\mathcal{E}^{^p}(X,\omega)\subset L^{p}(\mu)$ for $p>1$, then there exists an $A>0$
such that $\mu\leq A C_{\omega}^{\alpha}$ where $\alpha=(1-{1}/{p})^{n}$.
\end{prop}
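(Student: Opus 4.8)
The plan is to first upgrade the qualitative inclusion to a quantitative energy--integrability inequality, and then to feed that inequality a family of test functions attached to a compact set. \emph{Step 1 (quantitative integrability).} I claim there is $C>0$ with
$$\int_X(-\f)^p\,d\mu\le C\int_X(-\f)^p\,\MA(\f)\qquad\text{for all }\f\in\E(X,\om)\text{ with }\sup_X\f=-1.$$
This is proved exactly as in the two preceding lemmas, by contradiction: if it failed one would get $\f_j$ with $\sup_X\f_j=-1$ and $\int_X(-\f_j)^p\,d\mu\ge 4^{jp}\int_X(-\f_j)^p\MA(\f_j)$. Writing $M_j:=\int_X(-\f_j)^p\MA(\f_j)\ge 1$ and rescaling $\p_j:=\e_j\f_j$ with $\e_j:=M_j^{-1/p}\le 1$ keeps $\p_j\in PSH(X,\om)$; since $\om_{\e\f}=(1-\e)\om+\e\om_\f\le\om_\f+\om$, the fundamental energy inequalities of \cite{GZ07} give $\int_X(-\p_j)^p\MA(\p_j)\le\e_j^p\sum_k C_n^k\int_X(-\f_j)^p\om_{\f_j}^{k}\wedge\om^{n-k}\le C$. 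Then $\p:=\sum_j 2^{-j}\p_j\in\E^p(X,\om)$ by convexity and lower semicontinuity of the $\E^p$ energy, whereas $\int_X(-\p)^p\,d\mu\ge 2^{-kp}\int_X(-\p_k)^p\,d\mu\ge 2^{kp}$ for all $k$, contradicting $\p\in L^p(\mu)$.

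\emph{Step 2 (reduction to compact sets).} By inner regularity of $\mu$ and capacitability of $C_\om$ (Choquet's theorem, recalled above), it suffices to find $A>0$ with $\mu(K)\le A\,C_\om(K)^{\a}$, $\a=(1-1/p)^n$, for every compact $K\subset X$. Fix $K$, set $c:=C_\om(K)$, and let $V_K\in PSH(X,\om)$, $V_K\ge 0$, $V_K=0$ on $K$, be the global extremal function with $M_K:=\sup_X V_K$. For $1\le s\le M_K$ put $\f_s:=\tfrac{s}{M_K}V_K-s-1$, so that $\f_s\in PSH(X,\om)$, $\sup_X\f_s=-1$, and $\f_s\le -s$ on $K$. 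Step 1 then yields
$$s^p\,\mu(K)\le\int_X(-\f_s)^p\,d\mu\le C\int_X(-\f_s)^p\,\MA(\f_s).$$

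\emph{Step 3 (energy estimate and the exponent).} The heart of the matter is to bound the right--hand side by a power of $c$. I would expand $\MA(\f_s)=\big((1-\tfrac{s}{M_K})\om+\tfrac{s}{M_K}\om_{V_K}\big)^n$ into its $n+1$ mixed terms and control each $\int_X(-\f_s)^p\,\om_{V_K}^{\,j}\wedge\om^{n-j}$ inductively on $j$: Lemma \ref{lem:Cap}, in the form $\d^n C_\om(\{\f<-t-\d\})\le\int_{\{\f<-t\}}\MA(\f)$, trades one Monge--Amp\`ere factor against capacity, and the accompanying H\"older step with conjugate exponents $p$ and $p/(p-1)$ costs exactly one factor of exponent $1-1/p$. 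Peeling the $n$ factors of $\om_{V_K}$ one at a time should therefore produce the power $(1-1/p)^n$; combined with the admissible range $1\le s\le M_K$ — guaranteed by the comparison between $M_K$ and $C_\om(K)$ of \cite{GZ05} — this converts the displayed inequality into $\mu(K)\le A\,c^{(1-1/p)^n}$.

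The main obstacle is precisely this energy estimate. Testing a \emph{single} sup--normalized extremal function only gives $\mu(K)\le\mathrm{const}$: after normalizing $\sup_X\f=-1$ its energy is of unit size, so the capacity cancels. The power must instead be extracted from the interplay of the $n$ Monge--Amp\`ere factors, and the delicate point is the bookkeeping of the $n$ successive H\"older/Lemma \ref{lem:Cap} steps that pins the exponent at $(1-1/p)^n$ (sharper, but more laborious, analysis of the same family may well give a better exponent, which the statement does not claim). I also expect the sup--normalization to require care, since it is exactly what prevents the naive one--function argument from detecting the capacity.
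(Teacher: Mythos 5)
There is a genuine gap here, and you have in fact located it yourself: the entire content of the proposition is the energy estimate of your Step 3, and that step remains a sketch. What is needed is an inequality of the form
$$
\int_X(-\f)^p\,d\mu\;\le\;A\left(\int_X(-\f)^p\,\MA(\f)\right)^{(1-1/p)^n}
\qquad\text{for all }\f\in PSH(X,\om)\text{ with }-1\le\f\le0,
$$
with no additive constant, because the capacity enters through test functions of \emph{small} energy: for the relative extremal function $\f=h^*_{\om,E}$ one has $\int_X(-\f)^p\,\MA(\f)=C_\om(E)$ and $\mu(E)\le\int_X(-\f)^p\,d\mu$, so a power $(1-1/p)^n<1$ of a small quantity is exactly what produces $\mu(E)\le A\,C_\om(E)^\a$. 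Your Step 1 lives in the opposite regime: it concerns sup-normalized functions, whose energy is $\ge 1$, and (as you yourself observe) it collapses to $\mu(K)\le\mathrm{const}$ when tested on extremal functions; no contradiction/rescaling argument of that type can detect the small-energy asymptotics. Your Step 3 outline does not repair this: Lemma \ref{lem:Cap} bounds the capacity of sublevel sets \emph{from above} by Monge--Amp\`ere mass, which is the wrong direction when one must dominate $\int_X(-\f_s)^p\,\MA(\f_s)$ by a power of $C_\om(K)$; and the family $\f_s=\tfrac{s}{M_K}V_K-s-1$ built on the global extremal function drags in the comparison between $M_K$ and $C_\om(K)$, whose exponent has nothing to do with $(1-1/p)^n$.

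The paper supplies the missing mechanism. Since $\E^p(X,\om)\subset L^p(\mu)$, the range theorem of \cite{GZ07} gives $\mu=\MA(\p)$ for some $\p\in\E^p(X,\om)$ with $\sup_X\p=-1$; this structural fact, rather than a quantitative integrability bound, is the real input. For $-1\le\f\le0$ one integrates by parts, $\int_X(-\f)^p\om_\p^n=\int_X(-\p)\bigl(-dd^c(-\f)^p\bigr)\wedge\om_\p^{n-1}+\int_X(-\f)^p\,\om\wedge\om_\p^{n-1}$, uses $-dd^c(-\f)^p\le p(-\f)^{p-1}dd^c\f$ together with $(-\f)^p\le(-\f)^{p-1}\le(-\p)(-\f)^{p-1}$ to obtain $\int_X(-\f)^p\om_\p^n\le p\int_X(-\p)(-\f)^{p-1}\om_\f\wedge\om_\p^{n-1}$, and then applies H\"older with exponents $p$ and $p/(p-1)$: the factor involving $\p$ is absorbed by $\bigl(\int_X(-\p)^p\om_\p^n\bigr)^{1/p}<+\infty$, and one factor $\om_\p$ has been traded for $\om_\f$ at the cost of the exponent $1-1/p$. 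Iterating $n$ times yields the displayed inequality, and testing with $h^*_{\om,E}$ finishes the proof. If you wish to salvage your outline, Step 1 must be replaced by this integration-by-parts loop; the rescaling argument cannot substitute for it.
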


\begin{proof}
Suppose that $\mathcal{E}^{^p}(X,\omega)\subset L^{p}(\mu)$ then by \cite{GZ07} $\mu=\omega^{n}_{\psi}$ for some $\psi\in \mathcal{E}^{^p}(X,\omega)$ such that $\sup_{X}\psi=-1$. Let $\varphi\in PSH(X,\omega)$ with $-1\leq \f \leq0$ then
\begin{eqnarray*}
\int_{X}(-\varphi)^{p}\omega_{\psi}^{n}&=&\int_{X}(-\varphi)^{p}\omega_{\psi}\wedge\omega_{\psi}^{n-1} \\
&=&\int_{X}(-\psi)(-dd^{c}(-\varphi)^{p})\wedge\omega_{\psi}^{n-1}+\int_{X}(-\varphi)^{p}\omega\wedge\omega_{\psi}^{n-1}.
\end{eqnarray*}
Now
$$
-dd^{c}(-\varphi)^{p}=-p(p-1)(-\varphi)^{p-2}d\varphi\wedge d^{c}\varphi+p(-\varphi)^{p-1}dd^{c}\varphi
\leq p(-\f)^{p-1} dd^c \f
$$
and $(-\f)^p \leq (-\f)^{p-1}$ since $0 \leq -\f \leq 1$, hence
\begin{eqnarray*}
\int_{X}(-\varphi)^{p}\omega_{\psi}^{n}
 &\leq&  p\int_{X}(-\psi)(-\varphi)^{p-1}dd^{c}\varphi\wedge\omega_{\psi}^{n-1}+\int_{X}(-\varphi)^{p-1}\omega\wedge\omega_{\psi}^{n-1} \\
&\leq&  p\int_{X}(-\psi)(-\varphi)^{p-1}dd^{c}\varphi\wedge\omega_{\psi}^{n-1}+\int_{X}(-\psi)(-\varphi)^{p-1}\omega\wedge\omega_{\psi}^{n-1} \\
 &=& p \int_{X}(-\psi)(-\varphi)^{p-1}\omega_{\varphi}\wedge\omega_{\psi}^{n-1}.
\end{eqnarray*}
H\"older inequality thus yields
\begin{eqnarray*}
\int_{X}(-\varphi)^{p}\omega_{\psi}^{n}
&\leq & p\left(\int_{X}(-\psi)^{p}\omega_{\varphi}\wedge\omega_{\psi}^{n-1}\right)^{\frac{1}{p}}\left(\int_{X}(-\varphi)^{p}\omega_{\varphi}\wedge\omega_{\psi}^{n-1}\right)^{1-\frac{1}{p}} \\
 &\leq & p\left(\int_{X}(-\psi)^{p}\omega_{\psi}^{n}\right)^{\frac{1}{p}}\left(\int_{X}(-\varphi)^{p}\omega_{\varphi}\wedge\omega_{\psi}^{n-1}\right)^{1-\frac{1}{p}}.
\end{eqnarray*}

Repeating the same argument $n$ times we end up with
\begin{equation*}
\int_{X}(-\varphi)^{p}\omega_{\psi}^{n}\leq A \left(\int_{X}(-\varphi)^{p}\omega_{\varphi}^{n}\right)^{(1-{1}/{p})^{n}}
\end{equation*}

Fix $E \subset  X$ a compact set. The conclusion follows by applying this inequality to the extremal function $\f=h^{*}_{\omega,E}$, observing that
$$
\mu(E) \leq \int_{X}(-h^{*}_{\omega,E})^{p}\omega_{\psi}^{n},
$$
while
$
C_{\omega}(E)=\int_{X}(-h^{*}_{\omega,E})^{p}\omega_{h^{*}_{\omega,E}}^{n},
$
as shown in \cite{GZ05}.
\end{proof}

\begin{lem}
 Let $\mu$ be a probability measure.
Then $\mathcal{E}^{p}(X,\omega)\subset L^{q}(\mu)$ if and only if  there exists a constant $C>0$ such that for all $\psi\in PSH(X,\omega)\cap L^{\infty}(X)$ with $\sup_{X}\psi=-1$
\begin{equation}\label{eq:staroflemma}
0\leq\int_{X}(-\psi)^{q}d\mu\leq C \left(\int_{X}(-\psi)^{p}\omega_{\psi}^{n}\right)^{\frac{q}{p+1}}
\end{equation}
\end{lem}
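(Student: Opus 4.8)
The statement is an equivalence between the inclusion $\mathcal{E}^{p}(X,\omega)\subset L^{q}(\mu)$ and a quantitative integrability estimate, and I would establish it by transposing the proof of the previous lemma almost verbatim, replacing the Choquet energy $\mathrm{Ch}_p$ and the class $\mathcal{Ch}^p$ by the weighted energy $E_p(\f):=\int_{X}(-\f)^{p}\omega_{\f}^{n}$ and by $\mathcal{E}^{p}(X,\omega)$. For the implication ``estimate $\Rightarrow$ inclusion'', take $\f\in\mathcal{E}^{p}(X,\omega)$. Adding a constant I may assume $\sup_{X}\f=-1$: this leaves $\omega_{\f}$, hence membership in $\mathcal{E}^{p}$, unchanged, and since $\mu$ is a probability measure it does not affect finiteness of $\int_{X}(-\f)^{q}\,d\mu$. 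I then apply the hypothesis to the bounded approximants $\f_{k}:=\max(\f,-k)$, which satisfy $\sup_{X}\f_{k}=-1$ and decrease to $\f$, so that $\int_{X}(-\f_{k})^{q}\,d\mu\leq C(\int_{X}(-\f_{k})^{p}\omega_{\f_{k}}^{n})^{q/(p+1)}$. The right-hand side stays bounded because $\sup_{k}\int_{X}(-\f_{k})^{p}\,\omega_{\f_k}^{n}<+\infty$ is exactly the characterization of $\mathcal{E}^{p}$ recalled earlier, and monotone convergence then yields $\int_{X}(-\f)^{q}\,d\mu<+\infty$.

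For the converse I argue by contradiction. If the estimate fails for every $C$, I extract $\f_{j}\in PSH(X,\omega)\cap L^{\infty}(X)$ with $\sup_{X}\f_{j}=-1$ and
\[
\int_{X}(-\f_{j})^{q}\,d\mu\geq 4^{jq}M_{j}^{\frac{q}{p+1}},\qquad M_{j}:=\int_{X}(-\f_{j})^{p}\omega_{\f_{j}}^{n}\geq 1,
\]
the lower bound on $M_{j}$ holding because $-\f_{j}\geq 1$ and $\omega_{\f_{j}}^{n}$ has total mass one. If $(M_{j})$ is bounded, then $\f:=\sum_{j\geq 1}2^{-j}\f_{j}$ lies in $\mathcal{E}^{p}$ by convexity of the class together with the uniform energy bound, whereas $\int_{X}(-\f)^{q}\,d\mu\geq 2^{-kq}\int_{X}(-\f_{k})^{q}\,d\mu\geq 2^{kq}$ for every $k$, so $\f\notin L^{q}(\mu)$, a contradiction. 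If $M_{j}\to+\infty$, I rescale $\p_{j}:=\e_{j}\f_{j}$ (with $0<\e_{j}\leq 1$) and set $\p:=\sum_{j\geq 1}2^{-j}\p_{j}$; the same lower bounds force $\p\notin L^{q}(\mu)$, and the point is to check that $\p\in\mathcal{E}^{p}$.

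The step I expect to be the main obstacle is the uniform control of the rescaled energies. Writing $\omega_{\p_{j}}=(1-\e_{j})\omega+\e_{j}\omega_{\f_{j}}\leq\omega+\e_{j}\omega_{\f_{j}}$ and expanding,
\[
E_{p}(\p_{j})\leq\e_{j}^{\,p}\sum_{k=0}^{n}C_{n}^{k}\,\e_{j}^{\,k}\int_{X}(-\f_{j})^{p}\,\omega_{\f_{j}}^{k}\wedge\omega^{n-k}.
\]
The key input, which replaces the packaging of all the mixed terms inside $\mathrm{Ch}_p$ used in the previous proof, is the monotonicity of the mixed energies: integrating by parts,
\[
\int_{X}(-\f)^{p}\,dd^{c}\f\wedge T=p\int_{X}(-\f)^{p-1}\,d\f\wedge d^{c}\f\wedge T\geq 0
\]
for every positive closed $T$, so that $\int_{X}(-\f_{j})^{p}\omega_{\f_{j}}^{k}\wedge\omega^{n-k}$ is nondecreasing in $k$ and therefore bounded by $M_{j}$. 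This gives $E_{p}(\p_{j})\lesssim\e_{j}^{\,p}M_{j}$.

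Finally, the rescaling $\e_{j}$ must be chosen so that $\e_{j}^{\,p}M_{j}$ stays bounded, which (via a lower-semicontinuity argument as in Proposition \ref{pro:convex}, applied to the decreasing bounded partial sums of $\p$) secures $\sup_j E_p(\p_j)<+\infty$ and hence $\p\in\mathcal{E}^{p}$, while simultaneously $2^{kq}\e_{k}^{\,q}M_{k}^{q/(p+1)}$ must still blow up so that $\p\notin L^{q}(\mu)$. It is precisely this balance that pins down the power of $M_{j}$ in the statement, and reconciling the energy constraint $\e_j^{\,p}M_j=O(1)$ with the divergence constraint under the normalization $\sup_{X}\f_j=-1$ is the delicate heart of the argument; the crude monotonicity bound above alone forces $\e_j\sim M_j^{-1/p}$, and sharpening it to the stated exponent is where care is needed.
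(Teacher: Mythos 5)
Your overall architecture is exactly the paper's: the easy direction via the bounded approximants, and the converse by contradiction, splitting into the cases $M_j$ bounded and $M_j\to+\infty$, with the rescaling $\p_j=\e_j\f_j$. But there is a genuine gap at precisely the step you flag as "the delicate heart", and it is the step that produces the exponent $q/(p+1)$ in the statement. Your bound
$$
E_{p}(\p_{j})\leq\e_{j}^{\,p}\sum_{k=0}^{n}C_{n}^{k}\,\e_{j}^{\,k}\int_{X}(-\f_{j})^{p}\,\omega_{\f_{j}}^{k}\wedge\omega^{n-k}\lesssim \e_j^{\,p}M_j
$$
is indeed too crude: it forces $\e_j\sim M_j^{-1/p}$, and with that choice the lower bound for $\int_X(-\p)^q d\mu$ becomes $2^{jq}M_j^{-q/(p(p+1))}$, which need not diverge, so the contradiction evaporates. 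You correctly diagnose the tension but do not resolve it, and resolving it is the whole content of the lemma.

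The fix, which is what the paper does, is to treat the $k=0$ term separately rather than bounding it by $M_j$. Since $\sup_X\f_j=-1$, the normalized family $(\f_j)$ is uniformly bounded in $L^p(\omega^n)$ (the compactness property of $PSH(X,\omega)$ recalled in the paper), so $\int_X(-\f_j)^p\omega^n=O(1)$ \emph{independently of} $M_j$. Every term with $k\geq 1$ carries at least one extra factor of $\e_j$, and by your (correct) monotonicity of the mixed energies each such integral is $\leq M_j$, whence
$$
E_p(\p_j)\leq \e_j^{\,p}\left(\int_X(-\f_j)^p\omega^n+2^n\e_j M_j\right)=\e_j^{\,p}\,O(1)+2^n\e_j^{\,p+1}M_j .
$$
Now the energy constraint is only $\e_j^{\,p+1}M_j=O(1)$, so the choice $\e_j=M_j^{-1/(p+1)}$ is admissible, and with it $2^{-jq}\e_j^{\,q}\int_X(-\f_j)^q d\mu\geq 2^{jq}\e_j^{\,q}M_j^{q/(p+1)}=2^{jq}\to+\infty$, completing the contradiction. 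Without isolating the $k=0$ term your argument cannot close, so as written the proof is incomplete at its decisive point.
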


\begin{proof}
One implication is clear so suppose that $\mathcal{E}^{p}(X,\omega)\subset L^{q}(\mu)$ and assume for a contradiction that there exists $\psi_{j}\in PSH(X,\omega)\cap L^{\infty}(X)$ with $\sup_{X}\psi_{j}=-1$ such that
\begin{equation*}
\int_{X}(-\psi_{j})^{q}d\mu\geq4^{jq}M_{j}^{\frac{q}{p+1}}
\end{equation*}where $M_{j}=\int_{X}(-\psi_{j})^{p}\omega_{\psi_{j}}^{n}$.

If $M_{j}$ is uniformly bounded then  $\psi=\sum_{j\geq1} {2^{-j}} \psi_{j}$ belongs to $\mathcal{E}^{p}(X,\omega)$. Now
\begin{equation*}
\int_{X}(-\psi)^{q}d\mu\geq\int_{X}\frac{(-\psi_{j})^{q}}{2^{jq}}d\mu\geq2^{jq}M_{j}^{\frac{q}{p+1}}\geq2^{jq}
\end{equation*}
since $\psi_{j}\leq-1$, $M_{j}\geq1$. So $\int_{X}(-\psi)^{q}d\mu\rightarrow\infty$, a contradiction.

We obtain the same contradiction if $\{M_{j}\}$ admits a bounded subsequence so we can assume $M_{j}\rightarrow\infty$ and $M_{j}\geq1$. Set $\varphi_{j}=\varepsilon_{j}\psi_{j}$ where $\varepsilon_{j}=M_{j}^{-\frac{1}{1+p}}$ and $\psi=\sum_{j\geq1}  {2^{-j}} \varphi_{j}$ then,
\begin{equation*}
\int_{X}(-\psi)^{q}d\mu\geq\int_{X}\frac{(-\varphi_{j})^{q}}{2^{jq}}d\mu=2^{-jq}\varepsilon_{j}^{q}\int_{X}(-\psi_{j})^{q}d\mu\geq2^{jq}\rightarrow\infty
\end{equation*}
so $\psi\notin L^{q}(\mu)$.

We now check that $\varphi_{j}\in \mathcal{E}^{p}(X,\omega)$ to derive a contradiction.
Since $\omega_{\varphi_{j}}\leq \varepsilon_{j}\omega_{\psi_{j}}+\omega$, we get
\begin{eqnarray*}
\int_{X}(-\varphi_{j})^{p}\omega_{\varphi_{j}}^{n}
&=& \varepsilon_{j}^{p}\int_{X}(-\psi_{j})^{p}\omega_{\varphi_{j}}^{n} \\
&\leq & \varepsilon_{j}^{p}\left(\int_{X}(-\psi_{j})^{p}\omega^{n}+2^{n}\varepsilon_{j}\int_{X}(-\psi_{j})^{p}\omega_{\psi_{j}}^{n}\right)=O(1),
\end{eqnarray*}
because
\begin{equation*}
\int_{X}(-\psi_{j})^{p}\omega_{\psi_{j}}^{n}=\int_{X}(-\psi_{j})^{p}\omega\wedge\omega_{\psi_{j}}^{n-1}+\int_{X}p(-\psi_{j})^{p-1}d\psi_{j}\wedge\ d^{c}\psi_{j}\wedge\omega_{\psi_{j}}^{n-1}
\end{equation*}
\begin{equation*}
\geq\int_{X}(-\psi_{j})^{p}\omega\wedge\omega_{\psi_{j}}^{n-1}\geq...\int_{X}(-\psi_{j})^{p}\omega^{k}\wedge\omega_{\psi_{j}}^{n-k}
\end{equation*}
for all $1\leq k\leq n-1$ and $\int_{X}(-\psi_{j})^{p}\omega^{n}$ is bounded since $\psi_{j}\in PSH(X,\omega)\cap L^{\infty}(X)$ and $\sup_{X}\psi_{j}=-1$.
\end{proof}

We are now ready to give necessary conditions for a non-pluripolar measure to be dominated by the Monge-Amp\`ere capacity,
in terms of its integrability condition properties with respect to Choquet-Monge-Amp\`ere classes:

\begin{prop} \label{pro:MA2}
Let $\mu$ be a non-pluripolar probability measure such that $\mu=MA(\psi)$ where $\psi\in Ch^p (X,\omega)$, then $\mu\leq (Cap_{\omega})^{\frac{p}{p+n}}$
\end{prop}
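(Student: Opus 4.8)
The plan is to establish the pointwise domination $\mu(E)\le A\,C_\omega(E)^{p/(p+n)}$ on all Borel sets $E$, where $A=A(\psi)<+\infty$ depends only on the Choquet energy of $\psi$ (the multiplicative constant, omitted in the statement, being harmless). By the Choquet capacitability theorem and the inner regularity of $C_\omega$ recorded above, together with the regularity of the Radon measure $\mu$, it suffices to treat compact $E$. If $E$ is pluripolar then $C_\omega(E)=0$ while $\mu(E)=0$ since $\mu$ is non-pluripolar, so we may assume $E$ non-pluripolar. Normalizing $\psi\le 0$ (which changes neither $\mu=\omega_\psi^n$ nor membership in $\mathcal{Ch}^p(X,\omega)$), I introduce the extremal function $u:=h^*_{\omega,E}$, which satisfies $-1\le u\le 0$, equals $-1$ on $E$ outside a pluripolar set, has $\omega_u^n$ carried by $E$, and verifies $C_\omega(E)=\int_X(-u)^s\omega_u^n$ for every $s\ge 0$ (see \cite{GZ05}). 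Since $\mu$ charges no pluripolar set, $-u=1$ holds $\mu$-almost everywhere on $E$, giving the starting estimate
\[
\mu(E)=\int_E(-u)^{p+n}\,\omega_\psi^n\le\int_X(-u)^{p+n}\,\omega_\psi^n .
\]

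Next I would transfer the $n$ factors $\omega_\psi$ onto $\omega_u$ by successive integrations by parts. The basic move, valid for $a\ge 1$, $\psi\le 0$ and any closed positive current $T$, is
\[
\int_X(-u)^a\,\omega_\psi\wedge T\le\int_X(-u)^a\,\omega\wedge T+a\int_X(-\psi)(-u)^{a-1}\,\omega_u\wedge T,
\]
obtained by writing $\omega_\psi=\omega+dd^c\psi$, integrating $dd^c$ onto $(-u)^a$, expanding $dd^c[(-u)^a]$ and discarding the two nonpositive contributions (the $du\wedge d^c u$ term and the residual $\omega$ term, both multiplied by $\psi\le 0$). Iterating this $n$ times, with the obvious bookkeeping, the fully transferred term is, up to a combinatorial constant, $\int_X(-\psi)^n(-u)^p\,\omega_u^n$; every other term carries at least one factor $\omega$ in place of an $\omega_u$ and is therefore estimated in the same way, indeed more easily. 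To the leading term I apply Hölder's inequality with conjugate exponents $\tfrac{p+n}{p}$ and $\tfrac{p+n}{n}$:
\[
\int_X(-\psi)^n(-u)^p\,\omega_u^n\le\Big(\int_X(-u)^{p+n}\,\omega_u^n\Big)^{\frac{p}{p+n}}\Big(\int_X(-\psi)^{p+n}\,\omega_u^n\Big)^{\frac{n}{p+n}},
\]
whose first factor is exactly $C_\omega(E)^{p/(p+n)}$ by the identity for $u$ recalled above.

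It then remains to bound the auxiliary integral $\int_X(-\psi)^{p+n}\omega_u^n$ by $\rm{Ch}_p(\psi)$ up to a constant. Because $-1\le u\le 0$ is bounded, one integration by parts replaces each $\omega_u=\omega+dd^c u$ by $\omega$ at the cost of mixed terms of the shape $\int(-\psi)^{p+n-1}d\psi\wedge d^c\psi\wedge(\cdots)$ and $\int(-\psi)^{p+n}\omega_\psi\wedge(\cdots)$, all of the type occurring in the Choquet energy; the Lemma opening Section 3, applied with $\psi$ and $u$ as the two potentials, then absorbs the remaining $\omega_u$ factors into $\omega_\psi$ and $\omega$. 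This yields $\int_X(-\psi)^{p+n}\omega_u^n\le C\,\rm{Ch}_p(\psi)$, so that, combining the three displays,
\[
\mu(E)\le C'\,\rm{Ch}_p(\psi)^{\frac{n}{p+n}}\;C_\omega(E)^{\frac{p}{p+n}}=A\,C_\omega(E)^{\frac{p}{p+n}},
\]
which is the claimed domination with $A=A(\psi)<+\infty$ since $\psi\in\mathcal{Ch}^p(X,\omega)$.

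The main obstacle is the bookkeeping in the iterated integration by parts: once a factor $(-\psi)$ has been created, the subsequent operators $dd^c$ act on products $(-\psi)^k(-u)^\ell$, producing many cross terms, and one must verify that each of them is dominated—after re-integration by parts and using $|u|\le 1$—by the leading term $\int_X(-\psi)^n(-u)^p\omega_u^n$ and by quantities controlled by $\rm{Ch}_p(\psi)$. Keeping all signs correct (every discarded term must be genuinely nonpositive, which rests on $\psi\le 0\le -u$ and $a\ge 1$) and checking that the final auxiliary integral reduces to the Choquet energy are the two delicate points; everything else is the single Hölder step, which is precisely what produces the sharp exponent $p/(p+n)$.
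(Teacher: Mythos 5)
Your overall strategy coincides with the paper's: test against the extremal function $u=h^*_{\omega,E}$ of a compact set, convert the factors $\omega_\psi$ into factors $\omega_u$ by repeated integration by parts, and extract $C_\omega(E)^{p/(p+n)}=\bigl(\int_X(-u)^{p+n}\omega_u^n\bigr)^{p/(p+n)}$ by H\"older while controlling the $\psi$-integrals by the Choquet energy. But the execution has a genuine gap, and it is exactly the point you flag as ``the main obstacle'' without resolving it. Your basic move is only valid when the function hit by $dd^c$ is a pure power $(-u)^a$: the sign analysis rests on $\psi\,dd^c[(-u)^a]$ decomposing into a nonpositive gradient term, a nonpositive $\omega$-term, and the one positive term you keep. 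After the first application the integrand is $(-\psi)(-u)^{a-1}$, and $dd^c\bigl[(-\psi)^k(-u)^\ell\bigr]$ contains cross terms of the form $d\psi\wedge d^c u$ which have no sign and cannot be discarded; moreover the term $\psi(-u)^{a-1}dd^c(-\psi)$ reproduces $(-\psi)(-u)^{a-1}\omega_\psi$, so the naive iteration is circular. The paper's proof avoids precisely this by applying H\"older \emph{at every step}, detaching the newly created factor $(-\psi)$ into a separate integral $\bigl(\int_X(-\psi)^{p+i}\omega_\varphi^{i}\wedge\omega_\psi^{n-i}\bigr)^{1/(p+i)}$ before the next integration by parts, so that each $dd^c$ again acts on a pure power $(-u)^{\sigma_m}$ (with the exponents $\sigma_m$ tracked by the recursion $\sigma_{m+1}=\frac{p+m}{p+m-1}(\sigma_m-1)$). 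That interleaving is the missing idea.

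A second, related weakness is the claim that the non-leading terms, those carrying a residual factor $\omega$ in place of an $\omega_u$, are ``estimated in the same way, indeed more easily.'' The final bound requires every term to produce the small factor $C_\omega(E)^{p/(p+n)}$; a term such as $\int_X(-u)^{p+n}\,\omega^{j}\wedge\omega_u^{n-j}$ with $j\geq 1$ is merely bounded by a constant and does not tend to $0$ with $C_\omega(E)$, so it would ruin the estimate rather than help it. In the paper's bookkeeping the $(-\varphi)$-integral always ends up carrying the full power $\omega_\varphi^{n}$, which is what makes the identification with $C_\omega(E)$ possible. By contrast, your auxiliary step bounding $\int_X(-\psi)^{p+n}\omega_u^n$ by $\mathrm{Ch}_p(\psi)$ is sound as sketched (there the function under $dd^c$ remains a pure power of $\psi$ throughout, and the resulting terms are exactly those of the Choquet energy), and your single final H\"older step and the identity $C_\omega(E)=\int_X(-u)^{s}\omega_u^n$ are used as in the paper. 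The proof would be repaired by replacing your all-at-once transfer with the paper's step-by-step H\"older splitting.
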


\begin{proof}
From \cite{GZ07} we already know that $\mu=\omega_{\psi}^{n}$ for some function $\psi\in \mathcal{E}(X,\omega)$ such that $\sup_{X}\psi=-1$. Now suppose also that $\psi\in Ch^p (X,\omega)$. For $\f\in PSH(X,\omega)$ with $-1\leq\f\leq0$ H\"{o}lder inequality and integration by parts yields
$$
\int_{X}(-\f)^{p+n}\omega_{\psi}^{n}\leq(p+n)\int_{X}(-\f)^{p+n-1}(-\psi)\omega_{\f}\wedge\omega_{\psi}^{n-1}
$$
$$
\leq(p+n)\left(\int_{X}(-\psi)^{p+1}\omega_{\f}\wedge\omega_{\psi}^{n-1}\right)^{\frac{1}{p+1}}\left(\int_{X}(-\f)^{\frac{(p+n)(p+n-1)}{p}}\omega_{\f}\wedge\omega_{\psi}^{n-1}\right)^{\frac{p}{p+1}}
$$
To handle the second term observe that 
$$
\int_{X}(-\f)^{\frac{(p+n)(p+n-1)}{p}}\omega_{\f}\wedge\omega_{\psi}^{n-1}
$$
$$
\leq c_{p,n}\left(\int_{X}(-\psi)^{p+2}\omega_{\f}^{2}\wedge\omega_{\psi}^{n-2}\right)^{\frac{1}{p+2}}\left(\int_{X}(-\f)^{\frac{(p+2)(p^{2}+(p+1)(n-1))}{p(p+1)}}\omega_{\f}^{2}\wedge\omega_{\psi}^{n-2}\right)^{\frac{p+1}{p+2}}
$$
As it can be observed, at each step the power of $(-\f)$ is obtained by reducing the previous power by $1$ first and then multiplying by $\frac{p+m}{p+m-1}$ where $m$ is the number of the corresponding step. Hence the power of $(-\f)$ at the $m$'th step, $\sigma_{m}$, is given by induction by
$$
\sigma_{m+1}=\frac{p+m}{p+m-1}(\sigma_{m}-1)
$$
Therefore we have to justify that $\sigma_{m}$ is bigger than $p+n-m$ and we can continue the procedure $n$-times. We will show this by induction:\\
For $m=1$, $\sigma_{1}=\frac{p+1}{p}(p+n-1)>p+n-1$ since $\frac{p+1}{p}>1$ and assume that $\sigma_{m}>p+n-m$ then
$$
\sigma_{m+1}=\frac{p+m}{p+m-1}(\sigma_{m}-1)>\frac{p+m}{p+m-1}(p+n-m-1)>p+n-(m+1) 
$$ since $\frac{p+m}{p+m-1}>1$. \\
Now at the $n$'th step we have,
$$
\int_{X}(-\f)^{p+n}\omega_{\psi}^{n}\leq c_{p,n}\left(\prod_{i=1}^{n}\left(\int_{X}(-\psi)^{p+i}\omega_{\f}^{i}\wedge\omega_{\psi}^{n-i}\right)^{\frac{1}{p+i}}\right)\left(\int_{X}(-\f)^{\sigma_{n}}\omega_{\f}^{n}\right)^{\frac{p}{p+n}}
$$ and since $0\leq(-\f)\leq1$ and $\sigma_{n}>p$ we have
$$
\leq c_{p,n}\left(\prod_{i=1}^{n}\left(\int_{X}(-\psi)^{p+i}\omega^{i}\wedge\omega_{\psi}^{n-i}\right)^{\frac{1}{p+i}}\right)\left(\int_{X}(-\f)^{p}\omega_{\f}^{n}\right)^{\frac{p}{p+n}}
$$ Each term in the product is bounded since $\psi\in Ch^{p} (X,\omega)$ so we have
$$
\int_{X}(-\f)^{p+n}\omega_{\psi}^{n}\leq A \left(\int_{X}(-\f)^{p}\omega_{\f}^{n}\right)^{\frac{p}{p+n}}
$$
and the conclusion follows by applying this inequality to the extremal function $\f=h^{*}_{\omega,E}$,
where $E \subset X$ is an arbitrary compact set.
\end{proof}

\begin{rem}
 In the case where $Ch^p (X,\omega)\subset L^{q}(\mu)$ and $q\geq p+n-1$, $p>1$ as an immediate consequence of Corollary \ref{cor:comparaison} and Proposition \ref{prop:measurecapacity} we obtain that there exists $A>0$ such that $\mu\leq ACap_{\omega}^{\alpha}$ where $\alpha=(1-\frac{1}{p})^{n}$.
\end{rem}

\subsection{Examples}

It follows from Corollary \ref{cor:comparaison} that the classes $\mathcal Ch^p (X,\omega)$
and $\mathcal E^p (X,\omega)$ coincide when $n=1$.
We describe in this section the finite Choquet energy classes in special cases.

\subsubsection{Compact singularities}

The class $\mathcal Ch^p (X,\omega)$ is similar to $\mathcal E^{p} (X,\omega)$
for functions with "compact singularities":

\begin{prop}  \label{prop:compact}
let $D$ be an ample $\Q$-divisor.
Let $\f$ be a $\omega$-psh function which is bounded in a neighborhood of $D$. Then
$$
\f \in \mathcal Ch^p (X,\omega) \Longleftrightarrow \f \in \mathcal E^{p} (X,\omega).
$$
\end{prop}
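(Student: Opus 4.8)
The plan is to prove only the nontrivial implication, since $\mathcal{Ch}^p(X,\om)\subset\mathcal E^p(X,\om)$ holds unconditionally by Corollary \ref{cor:comparaison}. So I assume $\f\in\mathcal E^p(X,\om)$ is bounded in a neighbourhood of $D$ and I want $\rm{Ch}_p(\f)<+\infty$, which by Theorem \ref{thm:caract} yields $\f\in\mathcal{Ch}^p(X,\om)$. Writing $\rm{Ch}_p(\f)=\sum_{j=0}^n C_n^j\,I_j$ with $I_j:=\int_X(-\f)^{p+j}\om_\f^{n-j}\wedge\om^j$, the term $I_0=\int_X(-\f)^p\om_\f^n$ is finite because $\f\in\mathcal E^p$, so the whole problem is to control the mixed terms $I_j$, $j\ge1$, by $I_0$. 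As in Theorem \ref{thm:caract} I would first reduce to $\f\in PSH(X,\om)\cap L^\infty(X)$, running all estimates on the canonical approximants $\f_k=\max(\f,-k)$ with constants independent of $k$ and using $\sup_k\int_X(-\f_k)^p\om_{\f_k}^n<+\infty$, then letting $k\to+\infty$.

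The key geometric input is the ampleness of $D$: then $X$ is projective (Kodaira) and $X\setminus\mathrm{Supp}(D)$ is affine, hence Stein. Fix $C_0$ with $\f\ge -C_0$ on a neighbourhood $U\supset\mathrm{Supp}(D)$ and choose $t_0>C_0$; then the compact set $K:=\{\f\le -t_0\}$ lies in the fixed compact $X\setminus U\Subset X\setminus\mathrm{Supp}(D)$, so the entire singularity of $\f$ is concentrated away from $D$. On a relatively compact neighbourhood $\Omega$ with $K\Subset\Omega\Subset X\setminus\mathrm{Supp}(D)$ I would produce a smooth function $\tau$ with $dd^c\tau\ge\om$ on $\Omega$ and $\tau\le0$ on $\bar\Omega$: such $\tau$ exists because $X\setminus\mathrm{Supp}(D)$ is Stein (take a strictly plurisubharmonic exhaustion, scale it so its $dd^c$ dominates $\om$ on $\Omega$, and subtract a constant). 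Note that $\tau$ is \emph{not} $\om$-plurisubharmonic; the crucial point is that $dd^c\tau$ itself, rather than $\om+dd^c\tau$, dominates $\om$.

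The heart of the argument is a one-step reduction $I_j\le C\,I_{j-1}+C'$. Outside $\Omega$ the function $\f$ is bounded and $\int_X\om_\f^{n-j}\wedge\om^j=V_\om(X)$, so that part of $I_j$ is bounded; on $\Omega$ I would replace one factor $\om$ by $dd^c\tau$ using $\om\le dd^c\tau$ and the positivity of $\beta:=\om_\f^{n-j}\wedge\om^{j-1}$, then integrate by parts:
$$\int_\Omega(-\f)^{p+j}\,\beta\wedge dd^c\tau=\int_\Omega\tau\,dd^c\big[(-\f)^{p+j}\big]\wedge\beta,$$
where
$$dd^c(-\f)^{p+j}=(p+j)(-\f)^{p+j-1}(\om-\om_\f)+(p+j)(p+j-1)(-\f)^{p+j-2}\,d\f\wedge d^c\f.$$
Since $\tau\le0$, the gradient term (whose density is a positive measure) contributes a nonpositive quantity and is discarded, the term carrying $\om$ is likewise nonpositive and discarded, and the remaining term is bounded by $(p+j)\|\tau\|_\infty\int_\Omega(-\f)^{p+j-1}\om_\f^{n-j+1}\wedge\om^{j-1}$, which is exactly $C\,I_{j-1}$ (the $(j-1)$-th mixed term). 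Iterating downward from $j=n$ to $j=1$ gives $I_j\le C''I_0+C'''=C''\int_X(-\f)^p\om_\f^n+C'''<+\infty$ for every $j$, hence $\rm{Ch}_p(\f)<+\infty$.

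The main obstacle is making the integrations by parts legitimate. Working on the bounded approximants $\f_k$ keeps every integral finite, but I must insert a cutoff $\chi\in C_c^\infty(\Omega)$ with $\chi\equiv1$ near $K$ and check that the commutator and boundary terms it generates are supported in $\{\f>-t_0\}$, where $\f$ is bounded, so that they remain uniformly bounded; I must verify finiteness of the relevant integrals before each integration by parts, which again follows from the bounded setting; and I must track that the constants $C,C',C'',\dots$ depend only on $n$, $p$, $\|\tau\|_\infty$ and $V_\om(X)$, not on $k$, so that the final bound survives the limit $k\to+\infty$. The sign bookkeeping in the integration by parts — using $\tau\le0$ to kill both the gradient term and the $\om$-term — is precisely what forces the choice of a $\tau$ with $dd^c\tau\ge\om$ rather than an $\om$-plurisubharmonic potential with poles along $D$; an $\om$-psh function $\rho$ with $\om_\rho\ge\e\,\om$ would not suffice, since $\om_\rho$ need not dominate $\om$.
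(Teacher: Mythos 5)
Your proof is correct in outline and its engine is exactly the paper's: an induction bounding $I_j:=\int_X(-\f)^{p+j}\om_\f^{n-j}\wedge\om^j$ by $I_{j-1}$, obtained by trading one factor of $\om$ for the $dd^c$ of a bounded potential, integrating by parts, and using $-dd^c(-\f)^{p+j}\leq (p+j)(-\f)^{p+j-1}\om_\f$ (your sign bookkeeping with $\tau\le 0$ is the mirror image of the paper's). Where you genuinely differ is in how that potential is produced. The paper stays global: assuming $c_1(D)=\{\om\}$, it chooses a smooth semipositive representative $\om'=\om+dd^c\rho$ with $\mathrm{supp}\,\om'$ contained in the neighbourhood $V$ of $D$ where $\f$ is bounded and $0\le\rho\le M$, writes $\om^j=\om^{j-1}\wedge(\om'-dd^c\rho)$, absorbs the $\om'$-term as $O(1)$, and integrates by parts on the closed manifold $X$ -- no cutoffs, no boundary terms. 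Your route through the Stein structure of $X\setminus\mathrm{Supp}(D)$ reaches the same estimate but at the cost of the commutator terms $d\chi\wedge d^c\bigl[(-\f)^{p+j}\bigr]\wedge\beta$, $(-\f)^{p+j}dd^c\chi\wedge\beta$, and this is the one under-proved step in your write-up: the assertion that these are bounded because they are ``supported in $\{\f>-t_0\}$ where $\f$ is bounded'' is not by itself a proof, since the cross terms involve $d^c\f$ and boundedness of $\f$ on $\mathrm{supp}(d\chi)$ does not directly control $\int_{\mathrm{supp}(d\chi)}(-\f)^{p+j-1}\,d\chi\wedge d^c\f\wedge\beta$; you need Cauchy--Schwarz together with a Chern--Levine--Nirenberg estimate for $\int d\f_k\wedge d^c\f_k\wedge\beta$ over a region where $\f$ is bounded on a slightly larger open set, uniformly in $k$. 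This is standard and true, so your argument closes, but it is precisely the technical overhead that the paper's cohomological choice of potential is designed to eliminate. Your one-line dispatch of the inclusion $\mathcal Ch^p(X,\om)\subset\mathcal E^p(X,\om)$ via Corollary \ref{cor:comparaison} is consistent with the paper, whose induction in any case bounds $\mathrm{Ch}_p(\f)$ above and below by $\int_X(-\f)^p\om_\f^n$ up to constants and so yields both implications at once.
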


The inclusions
$\mathcal E^{p+n-1} (X,\omega) \subset \mathcal Ch^p (X,\omega) \subset \mathcal E^p (X,\omega)$
are strict  in general when $n \geq 2$, as we show in Example \ref{exa:contre} below .

\begin{proof}
Let $V$ be a neighborhood of $D$ where $\f$ is bounded.
For simplicity we assume that $c_1(D)=\{\omega\}$. Let $\omega'$ be a smooth semi-positive closed
form cohomologous to $\omega$, such that $\omega' \equiv 0$ outside $V$. Let $\rho$ be a smooth
$\omega$-psh function such that $\omega'=\omega+dd^c \rho$. Shifting by a constant, we can
assume that $0 \leq \rho \leq M$. Observe that
{\small
\begin{eqnarray*}
-dd^c (-\f)^{p+j} &=&-(p+j)(p+j-1) (-\f)^{p+j-2} d \f \wedge d^c \f +(p+j) (-\f)^{p+j-1} \omega_\f \\
&\leq&  (p+j) (-\f)^{p+j-1} \omega_\f.
\end{eqnarray*}
}
Therefore
\begin{eqnarray*}
\int (-\f)^{p+j} \omega_{\f}^{n-j} \wedge \omega^j &=&
\int (-\f)^{p+j} \omega_{\f}^{n-j} \wedge \omega^{j-1} \wedge \omega'  \\
& + & \int -(-\f)^{p+j} \omega_{\f}^{n-j} \wedge \omega^{j-1} \wedge dd^c \rho \\
&=& O(1)+\int \rho \, dd^c [-(-\f)^{p+j}] \wedge \omega_{\f}^{n-j} \wedge \omega^{j-1}  \\
& \leq & O(1)+(p+j)M \int (-\f)^{p+j-1} \omega_{\f}^{n-j+1} \wedge \omega^{j-1}.
\end{eqnarray*}

We denote here by $O(1)$ the first term
$\int (-\f)^{p+j} \omega_{\f}^{n-j} \wedge \omega^{j-1} \wedge \omega' $ which is bounded, since
$\f$ is bounded on the support of $\omega'$.

By induction we obtain that each term
$\int (-\f)^{p+j} \omega_{\f}^{n-j} \wedge \omega^j$ is controlled by
$\int (-\f)^{p} \omega_{\f}^n$. Thus $\rm{Ch}_p(\f)$ is finite if and only if so is
$\int (-\f)^{p} \omega_{\f}^n$.
\end{proof}

This proposition allows us to cook up examples of $\omega$-psh functions $\f$
such that $\f \in \mathcal Ch^p(X,\omega)$ but $\f \notin \E^{p+n-1}(X,\omega)$.
The next example shows how to cook up examples such that $\f \in \E^p(X,\omega)$ but
$\f \notin \mathcal Ch^p(X,\omega)$:

\begin{exa}   \label{exa:contre}
Assume $X = \mathbb \C\P^{n - 1} \times \mathbb \C\P^1$ and $\om (x,y) := \a (x) + \b (y),$ where
$\a$ is the Fubini-Study form on $\mathbb  \C \P^{n - 1}$ and $\b$ is the Fubini-Study form on
$\mathbb \C\P^1$.
Fix $u \in PSH (\mathbb  \C\P^{n - 1},\a) \cap \mathcal C^{\infty} (\mathbb  \C\P^{n - 1})$
and  $v \in  \mathcal E (\mathbb \C\P^1,\b)$.

The function $\f$ defined by  $\f (x,y) := u (x) + v (y)$ for $(x,y) \in X$ belongs to
$\mathcal E (X,\om)$.
Moreover $\om_{\f} = \a_u + \b_v$ and for any $1 \leq \ell \leq n$, we have
$$
\om_{\f}^{n-j} = \a_u^{n-j} +(n-j) \a_u^{n-j - 1} \wedge \b_v
$$
and
$$
\om_{\f}^{n-j} \wedge \omega^j=\a_u^{n-j} \wedge \a^j+j \a^{j-1} \wedge \a_u^{n-j } \wedge \b
+(n-j) \a^j \wedge \a_u^{n-j - 1} \wedge \b_v.
$$
Thus for $j \leq n-1$,
$$
\f \in L^{p+j} (\om_{\f}^{n-j} \wedge \omega^j)
\Longleftrightarrow v \in L^{p+j} (\b_{v})
$$
hence
$$
\f \in \mathcal Ch^p (X,\omega) \Longleftrightarrow v \in \E^{p+n-1}(\C\P^1,\b)
$$
while
$$
\f \in \E^p (X,\omega) \Longleftrightarrow v \in \E^{p}(\C\P^1,\b).
$$

Choosing  $v \in L^p (\b_v) \setminus L^{p+n-1} (\b_{v})$,
we obtain an example of a $\om$-psh function $\f$
such that $\f \in \mathcal E^p (X,\om)$ but $\f \notin   \mathcal Ch^p(X,\om).$
\end{exa}

\begin{rem}
In the above examples, we can choose $u$ and $v$ toric, hence   both inclusions in Corollary \ref{cor:comparaison}
are sharp in the toric setting as well.
For details on toric singularities, we refer the reader to \cite{G14,DN15}.
\end{rem}

\subsubsection{Divisorial singularities}

Let  $D$ be an ample $\Q$-divisor, $s$ a holomorphic defining section of $L_D$
 and $h$ a smooth positive metric of $L$. We assume for simplicity that the curvature of
$h$ is $\omega$, so that the Poincar\'e-Lelong formula can be written
$$
dd^c \log |s|_h=[D]-\omega,
$$
where $[D]$ denotes the current of integration along $D$.

Let $\chi$ be a smooth convex increasing function and set $\f=\chi \circ \log |s|_h$.
We normalize $h$ so that $\chi' \circ \log |s|_h \leq 1/2$. It follows that $\f$ is strictly $\omega$-psh, since
$$
dd^c \f=\chi'' \circ L \, dL \wedge d^c L+\chi' \circ L \, dd^c L \geq -\chi' \circ L \, \omega
\geq -\omega/2,
$$
where $L:=\log |s|_h$.

\begin{prop} \label{prop:div}
Set $\f=\chi \circ \log |s|_h \in PSH(X,\omega)$. Then
$$
\f  \in \mathcal Ch^p (X,\omega)
\Longleftrightarrow
\f \in \E^{p+n-1}(X,\omega).
$$
\end{prop}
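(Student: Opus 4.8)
The plan is to read the statement as two inclusions and dispatch them separately. The implication $\f\in\mathcal E^{p+n-1}(X,\om)\Rightarrow\f\in\mathcal Ch^p(X,\om)$ is exactly the first inclusion of Corollary \ref{cor:comparaison}, so nothing needs to be done there. For the reverse implication I would start from $\f\in\mathcal Ch^p(X,\om)$ and use Corollary \ref{cor:comparaison} again, this time to get $\f\in\mathcal E^p(X,\om)\subset\mathcal E(X,\om)$ for free. This is more than cosmetic: membership in $\mathcal E(X,\om)$ forces vanishing Lelong numbers, hence $\chi'(t)\to0$ as $t\to-\infty$ and $\om_\f$ puts no mass on $D$. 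Thanks to Theorem \ref{thm:caract} I may then replace the goal "$\f\in\mathcal E^{p+n-1}$" by the single quantitative estimate $\int_X(-\f)^{p+n-1}\om_\f^n<+\infty$, while being allowed to use $\mathrm{Ch}_p(\f)<+\infty$ as a hypothesis.

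The key step is an explicit computation exploiting the radial structure. Writing $L=\log|s|_h$ and using $dd^cL=-\om$ away from $D$, I would record the decomposition
\[ \om_\f = a\,\om + T,\qquad a:=1-\chi'\circ L\in[\tfrac12,1],\qquad T:=\chi''\circ L\,\,dL\wedge d^cL\ge0. \]
Since $T$ is the wedge of a $(1,0)$ and a $(0,1)$ form, $T\wedge T=0$, so for every $0\le j\le n$ one has $\om_\f^{\,n-j}\wedge\om^j=a^{\,n-j}\om^n+(n-j)\,a^{\,n-j-1}\,\om^{n-1}\wedge T$. Because $a$ is pinched between $1/2$ and $1$, every term of $\mathrm{Ch}_p(\f)$ and the target energy become comparable, up to constants depending only on $n$, to the two families
\[ I(q):=\int_X(-\f)^q\,\om^n,\qquad J(q):=\int_X(-\f)^q\,\om^{n-1}\wedge T, \]
all integrals being taken over $X\setminus D$, which is harmless since the relevant currents charge no pluripolar set.

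Two elementary facts then drive the conclusion. First, $I(q)<+\infty$ for all $q$: convexity together with $\chi'\le1/2$ gives the linear bound $-\f\le C(1-\log|s|_h)$, and $\log|s|_h\in\bigcap_q L^q(\om^n)$ because $s$ is holomorphic (the sublevel sets $\{L<-t\}$ have exponentially small volume), so the $I$-terms never obstruct finiteness. Second, $\int_X\om^{n-1}\wedge T<+\infty$, which I would obtain from $T=\om_\f-a\om$ and Stokes' formula: since $\om_\f$ is cohomologous to $\om$ and carries no mass on $D$, $\int_X\om^{n-1}\wedge T=\int_X\chi'\circ L\,\om^n\le\tfrac12$. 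Granting these, both $\mathrm{Ch}_p(\f)<+\infty$ and $\int_X(-\f)^{p+n-1}\om_\f^n<+\infty$ reduce to the single condition $J(p+n-1)<+\infty$: the energy is $\simeq I(p+n-1)+nJ(p+n-1)$, while $\mathrm{Ch}_p(\f)\simeq\sum_{j=0}^{n}I(p+j)+\sum_{j=0}^{n-1}(n-j)J(p+j)$, and one sees $J(p+n-1)$ dominates every $J(p+j)$ by splitting over $\{-\f\le1\}$ (bounded by $\int_X\om^{n-1}\wedge T$) and over $\{-\f>1\}$ (where $(-\f)^{p+j}\le(-\f)^{p+n-1}$).

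I expect the main obstacle to be analytic bookkeeping near $D$ rather than any single hard estimate: namely justifying the decomposition $\om_\f=a\om+T$ and the identity $MA(\f)=\om_\f^n=a^n\om^n+n\,a^{n-1}\om^{n-1}\wedge T$ as non-pluripolar products across the divisor, confirming that $T$ genuinely carries no singular $[D]$-component (this is where the vanishing of Lelong numbers, imported from Corollary \ref{cor:comparaison}, is essential), and running the Stokes/cohomology argument for the unbounded potential $\f$. Once these integrability and current-theoretic points are secured, the comparison of exponents is purely combinatorial and the critical index $p+n-1$ emerges automatically.
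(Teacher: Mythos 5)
Your proposal is correct and follows essentially the same route as the paper: the same decomposition $\omega_\f=(1-\chi'\circ L)\,\omega+\chi''\circ L\,dL\wedge d^cL$ after using finiteness of energy to kill the $[D]$-component, the same use of $T\wedge T=0$ and the pinching $\tfrac12\le 1-\chi'\circ L\le 1$ to make every mixed measure $\omega_\f^{n-j}\wedge\omega^j$ ($j\le n-1$) comparable to $\omega^n+\chi''\circ L\,dL\wedge d^cL\wedge\omega^{n-1}$, and the same reduction to the single critical integrability condition at exponent $p+n-1$. You merely make explicit two steps the paper leaves implicit (that $\int_X(-\f)^q\omega^n<+\infty$ for all $q$ and that $\omega^{n-1}\wedge T$ has finite mass), and you outsource one implication to Corollary \ref{cor:comparaison}, which is fine.
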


\begin{proof}
 Set $L=\log |s|_h$. Observe that
$$
\omega+dd^c \f=\chi'' \circ L \, dL \wedge d^c L+\chi' \circ L \, [D]+(1-\chi' \circ L) \omega.
$$

A necessary condition for $\f$ to belong to a finite energy class is that $\omega_\f$ does not charge pluripolar sets, hence $\chi'(-\infty)=0$ and
$$
\omega+dd^c \f=\chi'' \circ L \, dL \wedge d^c L+(1-\chi' \circ L) \omega.
$$
Since $\frac{1}{2} \leq 1-\chi' \circ L \leq 1$, we infer
$$
\omega_{\f}^{n-j} \wedge \omega^j \sim \chi'' \circ L \, dL \wedge d^c L \wedge \omega^{n-1}
+\omega^n,
$$
for $0 \leq j \leq n-1$. We write here $\mu \sim \mu'$ if the  positive Radon measures $\mu,\mu'$
are uniformly comparable, i.e. $C^{-1} \mu \leq \mu' \leq C \mu$ for some constant $C>0$.
Thus
\begin{eqnarray*}
\f \in  \mathcal Ch^p (X,\omega)
&\Longleftrightarrow&
\f \in L^{p+n-1}(\chi'' \circ L \, dL \wedge d^c L \wedge \omega^{n-1}) \\
&\Longleftrightarrow&
\f \in L^{p+n-1}(\omega_\f^n)
\Longleftrightarrow
\f \in \E^{p+n-1}(X,\omega).
\end{eqnarray*}
\end{proof}

\begin{exa}
 For $\chi(t)=-(-t)^\a, \; 0 <\a<1$, we obtain
$$
\f=-(-\log |s|_h)^\a \in \E^p(X,\omega)
\text{ iff } \a< \frac{1}{p+1}
$$
and
$$
\f=-(-\log |s|_h)^\a \in \mathcal Ch^p(X,\omega)
\text{ iff } \a< \frac{1}{p+n}
$$
\end{exa}

We refer the reader to \cite{DN15} for more information on Monge-Amp\`ere measures
with divisorial singularities.


\begin{thebibliography}{widestlabel}

\addcontentsline{toc}{chapter}{Bibliography}


\bibitem [BT82] {BT82} E.~Bedford, B.~A.~Taylor: A new capacity for plurisubharmonic functions. Acta Math. {\bf 149} (1982), no. 1-2, 1--40.


\bibitem [BGZ08] {BGZ08} S.~Benelkourchi, V.~Guedj, A.~Zeriahi: A priori estimates for weak solutions of complex Monge-Amp{\`e}re equations.  Ann. Scuola Norm. Sup. Pisa C1. Sci. ({\bf 5}), Vol VII (2008), 1-16.

\bibitem [BGZ09] {BGZ09} S.~Benelkourchi, V.~Guedj,  A.Zeriahi:
Plurisubharmonic functions with weak singularities.
Complex analysis and digital geometry, 57-74, Acta Univ. Upsaliensis Skr. Uppsala Univ. C Organ. Hist., 86,
Proceedings of the conference in honor of C.Kiselman (Kiselmanfest, Uppsala, May 2006) Uppsala Universitet, Uppsala (2009).

\bibitem [BBGZ13] {BBGZ13} R.~Berman, S.~Boucksom, V.~Guedj, A.~Zeriahi: A variational approach to complex Monge-Amp\`ere equations. Publ.Math.I.H.E.S. {\bf 117} (2013), 179-245.

\bibitem [BOUR]{BOUR} N.Bourbaki: El\'ements de math\'ematiques, Topologie g\'en\'erale,
  Fsc VIII, livre III Chap 9.

\bibitem [BEG13]{BEG13} S.~Boucksom, P.~Eyssidieux, V.~Guedj:
An introduction to the K\"ahler-Ricci flow.
Lecture Notes in Math., {\bf 2086} , Springer, Heidelberg (2013).



\bibitem [DN15] {DN15} E.DiNezza: Finite pluricomplex energy measures. Preprint arXiv:1501.03747 (2015).

\bibitem [EGZ08] {EGZ08} P.~Eyssidieux, V.~Guedj, A.~Zeriahi:  A
priori $L^{\infty}$-estimates for degenerate complex Monge-Amp{\`e}re
equations. International Mathematical Research Notes, Vol. {\bf 2008}, Article ID rnn070, 8 pages.

\bibitem [EGZ09] {EGZ09} P.~Eyssidieux, V.~Guedj, A.~Zeriahi: Singular K\"ahler-Einstein metrics. J. Amer. Math. Soc. {\bf 22} (2009), 607-639.
\bibitem [G14]  {G14} V.Guedj: The metric completion of the Riemannian space of K\"{a}hler metrics. Preprint arXiv:1401.7857.v2 (2014).

\bibitem [GZ05] {GZ05} V.~Guedj, A.~Zeriahi: Intrinsic capacities on compact K{\"a}hler manifolds. J. Geom. Anal.  {\bf 15}  (2005),  no. 4, 607-639.

\bibitem [GZ07] {GZ07} V.~Guedj, A.~Zeriahi: The weighted Monge-Amp{\`e}re energy
   of quasiplurisubharmonic functions. J. Funct. An.  {\bf 250} (2007), 442-482.




\end{thebibliography}
\end{document}